\theoremstyle{definition}
\newtheorem{definition}{Definition}[section]
\newtheorem{example}[definition]{Example}
\newtheorem{remark}[definition]{Remark}
\newtheorem{notation}[definition]{Notation}
\theoremstyle{plain}
\newtheorem*{ThmA}{Theorem A}
\newtheorem{theorem}[definition]{Theorem}
\newtheorem{proposition}[definition]{Proposition}
\newtheorem{lemma}[definition]{Lemma}
\numberwithin{equation}{section}
\def \alt96 {`}
\def \R {\mathbb{R}}
\def \loc {\mathrm{loc}}
\def \N {\mathds{N}}
\def \c {\mathbf{c}}
\def \G {\mathbb{G}}
\def \dela {{\delta_\lambda}}
\def \c {\mathbf{c}}
\def \LL {{\mathcal{L}}}
\def \d {\mathrm{d}}
\def \de {\partial}
\def \Lie {\mathrm{Lie}}
\def \LL {\mathcal{L}}
\begin{document}
  \author{Stefano Biagi}
 \address{Stefano Biagi: Dipartimento di Ingegneria Industriale e Scienze Matematiche,
 Università Politecnica delle Marche,  via Brecce Bianche 12, I-60131 Ancona, Italy.}
 \email{s.biagi@dipmat.univpm.it}
  \author{Andrea Bonfiglioli}
 \address{Andrea Bonfiglioli: Dipartimento di Matematica,
 Alma Mater Studiorum - Università di Bologna,
 Piazza Porta San Donato 5, I-40126 Bologna, Italy.}
 \email{andrea.bonfiglioli6@unibo.it}
  \author{Marco Bramanti}
 \address{Marco Bramanti: Dipartimento di Matematica, Politecnico di Milano,
 Via Bonardi 9, I-20133 Milano, Italy.}
 \email{marco.bramanti@polimi.it}

 \title[Global Sobolev Estimates for Homogeneous PDO's]{Global estimates in Sobolev spaces\\ for homogeneous H\"ormander sums of squares}
\begin{abstract}
 Let $\LL=\sum_{j=1}^m X_j^2$ be a H\"ormander sum of squares of vector fields in space $\R^n$,
 where any $X_j$ is homogeneous of degree $1$ with respect to a family
 of non-isotropic dilations in space.
 In this paper we prove
 global estimates and regularity properties for $\LL$ in
 the $X$-Sobolev spaces $W^{k,p}_X(\R^n)$, where
 $X = \{X_1,\ldots,X_m\}$. In our approach,
 we combine local results for general H\"ormander sums of squares,
 the homogeneity property of the $X_j$'s, plus
 a global lifting technique for homogeneous vector fields.
\end{abstract}
\maketitle

\subjclass{\footnotesize{\textbf{Mathematics Subject Classification}:
 35B45, 35B65 (primary); 35J70, 35H10, 46E35 (secondary).

%

\textbf{Keywords}:
 A priori estimates; Sobolev spaces;
 Regularity of solutions;
 In\-ter\-po\-la\-tion inequalities.
}}


\section{Introduction and statement of the result}

Let $X_{1},\ldots,X_{m}$ be a set of smooth and linearly in\-de\-pen\-dent\footnote{The linear independence of the $X_i$'s
 is meant with respect to the vector space of the smooth vector fields on $\R^n$;
 this must not be confused with the linear independence of the
 vectors $X_1(x),\ldots,X_m(x)$ in $\R^n$
 (when $x\in\R^n$):
 the latter is sufficient but not necessary to the former linear independence.
 Thus, $X_1=\de_{x_1}$ and $X_2=x_1\,\de_{x_2}$ are linearly independent vector fields, even if $X_1(0,x_2)\equiv (1,0)$ and $X_2(0,x_2)\equiv(0,0)$
 are dependent vectors of $\R^2$.}
vector fields on $\mathbb{R}^{n}$, satisfying the following assumptions:\medskip

\begin{itemize}
\item[(H.1)] there exists a family of (non-isotropic) dilations
 $\{\delta_{\lambda}\}_{\lambda>0}$ of the form
$$
 \delta_{\lambda}:\mathbb{R}^{n}\longrightarrow\mathbb{R}^{n}\qquad
 \delta_{\lambda}(x)=(\lambda^{\sigma_{1}}x_{1},\ldots,\lambda^{\sigma_{n}}x_{n}),
$$
 where $1=\sigma_{1}\leq\cdots\leq\sigma_{n}$ are integers such that
 the $X_{i}$'s are $\delta_{\lambda}$-homogeneous of degree $1$:
$$
 X_{j}(f\circ\delta_{\lambda})=\lambda\,(X_{j}f)\circ\delta_{\lambda},\quad\forall\,\,\lambda>0,\,\,f\in C^{\infty}(\mathbb{R}^{n}),\quad j=1,\ldots,m;
$$
  In what follows, we denote by $q:=\sum_{j=1}^{m}\sigma_{j}$ the so-called homogeneous dimension
  of $(\R^n,\dela)$.

\item[(H.2)] $X_{1},\ldots,X_{m}$ satisfy H\"{o}rmander's rank condition at $0$, i.e.,
$$
 \dim\left\{  Y(0):Y\in\mathrm{Lie}(X)\right\}  =n,
$$
 where $\mathrm{Lie}(X)$ is the smallest Lie sub-algebra of the Lie algebra of the smooth vector fields on $\R^n$ 
 which contains $X:=\{X_{1},\ldots,X_{m}\}$.\medskip
\end{itemize}
 Some remarks on our assumptions are in order. Assumption (H.1) implies that, if
 $$ X_{j}=\sum_{k=1}^{n} b_{j,k}(x)\,\partial_{x_{k}},$$
 then $b_{j,k}(x)$ must be a polynomial function, $\delta_{\lambda}$-homogeneous of degree $\sigma_{k}-1$.
 Incidentally, this straightforwardly implies that
\begin{equation}\label{notazioneXIprecede}
 b_{j,k}(x)=b_{j,k}(x_1,\ldots,x_{k-1})\quad \text{for any $j\leq m$ and $k\leq n$,}
\end{equation}
 or, more precisely, $b_{j,k}(x)$ depends on those $x_i$'s such that $\sigma_i\leq \sigma_{k}-1$.
 From \eqref{notazioneXIprecede} we infer that the formal adjoint of $X_j$ is $-X_j$.
 Let us fix some notation. For any multi-index $I=(i_{1},\ldots ,i_{k})$ with $i_1,\ldots,i_k\in\left\{1,2,\ldots,m\right\}$, we let
\begin{equation}\label{notazioneXI}
 X_{I}=X_{i_{1}}X_{i_{2}}\cdots X_{i_{k}},\quad
 {X}_{[I]}=\left[\left[{X}_{i_{1}},{X}_{i_{2}}\right],\ldots,{X}_{i_{k}}\right],\qquad |I|=k.
\end{equation}
 When $k=1$ and $I=(i_1)$, we agree to let $X_I=X_{i_1}$. It is easy to check that, by (H.1),
 the operators $X_I$ and $X_{[I]}$ are $\dela$-homogeneous of degree $|I|$.  The $\dela$-homogeneity
 of the vector field $X_{[I]}$ is equivalent to the identity
\begin{equation}\label{equidellahomogvf}
 X_{[I]}(\dela(x))=\lambda^{-|I|}\dela(X_{[I]}(x)),\quad \forall\,\,\lambda>0,\,\,x\in\R^n.
\end{equation}
\begin{remark}[Global H\"ormander condition]\label{hormandereverywhere}
   We observe that, by (H.1) and (H.2), the validity of H\"{o}rmander's rank condition at $0$ implies its validity at any other point
 $x\in\mathbb{R}^{n}$.
 Indeed, the iterated (left nested) brackets $X_{[I]}$ span $\mathrm{Lie}(X)$. Hence, by (H.2), we can find a family $X_{[I_1]},\ldots,X_{[I_n]}$
 such that  $X_{[I_1]}(0),\ldots,X_{[I_n]}(0)$ is a basis of $\R^n$.
 Thus, the matrix-valued function
  $$z\mapsto \mathbf{M}(z) := \big(X_{[I_1]}(z)\cdots X_{[I_n]}(z)\big)$$
 is non-singular at $z=0$; therefore, there exists a neighborhood $\Omega$ of $0$ such that
   $\det(\mathbf{M}(z)) \neq 0$ for every $z \in \Omega$.
 Fixing $x \in \R^n$ and taking a small $0<\lambda \ll 1$ such that $\delta_\lambda(x) \in \Omega$, we have
   \begin{align*}
   0\neq  \det \big(\mathbf{M}\big(\delta_\lambda(x)\big)\big)&\stackrel{\eqref{equidellahomogvf}}{=}
    \det\Big(\lambda^{-|I_1|}\,\delta_\lambda\big(X_{[I_1]}(x)\big)\cdots
    \lambda^{-|I_n|}\,\delta_\lambda\big(X_{[I_n]}(x)\big)\Big) \\
    &\,\,\,=\,\, \lambda^{-|I_1|-\cdots-|I_n|}\,
    \det\Big(\delta_\lambda\big(X_{[I_1]}(x)\big)\cdots
    \delta_\lambda\big(X_{[I_n]}(x)\big)\Big).
   \end{align*}
 This implies that the vectors
    $\delta_\lambda\big(X_{[I_1]}(x)\big),\ldots,
    \delta_\lambda\big(X_{[I_n]}(x)\big)$ form a basis of $\R^n$, so that
    the same is true of $X_{[I_1]}(x),\ldots,X_{[I_n]}(x)$, since the linear map
    $\delta_\lambda$ is an isomorphism of $\R^n$.
 This proves that $X_1,\ldots,X_m$ satisfy H\"ormander's rank condition at any $x\in\R^n$.
\end{remark}
 Thus, by H\"{o}rmander's Theorem \cite{Hor2}, the \emph{homogeneous sums of squares}
 $$ \LL =\sum_{j=1}^{m}X_{j}^{2} $$
 is $C^\infty$-hypoelliptic on every open set $\Omega\subseteq\mathbb{R}^{n}$, which means that
 every distributional solution $u$ of an equation $Lu=f$ in $\Omega$ is smooth on
 every sub-domain $\Omega'\subseteq\Omega$ where $f$ is smooth. From \eqref{notazioneXIprecede}
 we also infer that $L$ is formally self-adjoint. Note that the case $q=2$ implies that
 $\LL$ is a strictly elliptic constant-coefficient operator on $\R^2$, so that it is not restrictive to assume that $q>2$.
\begin{example}\label{exa.grushin1}
 In $\mathbb{R}^{2}$, let us consider
 $$ X_{1}  =\partial_{x_{1}},\quad X_{2}  =x_{1}\,\partial_{x_{2}};\qquad \delta_{\lambda}(x_{1},x_{2})   =(\lambda x_{1},\lambda^{2}x_{2}).$$
 Condition (H.1) is easily checked. Here $n=2$, $q=3$ and
 $$ \LL=X_{1}^{2}+X_{2}^{2}=\partial_{1,1}+x_{1}^{2}\,\partial_{2,2}.$$
 Condition (H.2) holds because $X_1$ and $[X_{1},X_{2}]=\de_{x_2}$ give a basis of $\R^2$ at any point.
\end{example}
\begin{example}\label{exa.grushin2}
 More generally, in $\mathbb{R}^{2}$, let
 $$ X_{1} =\partial_{x_{1}},\quad X_{2}   =x_{1}^{k}\,
 \partial_{x_{2}};\qquad \delta_{\lambda}(x_{1},x_{2})    =(\lambda x_{1},\lambda^{k+1}x_{2}).$$
 Again, (H.1) is easy to check. Here $n=2$, $q=k+2$  and
 $$
 \LL=X_{1}^{2}+X_{2}^{2}=\partial_{1,1}+x_{1}^{2k}\,\partial_{2,2}.$$
 Condition (H.2) holds true as well because $X_1$ and
 $$\frac{1}{k!}[X_{1},[X_{1},\ldots [X_{1},X_{2}]]]=\de_{x_2}\qquad \text{(bracket of length $k+1$)}$$
 span $\R^2$ at any point.
\end{example}
\begin{example}
 In $\mathbb{R}^{n}$, let us consider
 $$X_{1} =\partial_{x_{1}},\quad X_{2} =x_{1}\partial_{x_{2}}+x_{2}\partial_{x_{3}}+\cdots+x_{n-1}\partial_{x_{n}};\qquad
  \delta_{\lambda}(x)  =(\lambda x_{1},\lambda^{2}x_{2},\cdots,\lambda^{n}x_{n}).$$
 (H.1) is easily checked. Note that $q=n(n+1)/2>n$ and
 $$ \LL=X_{1}^{2}+X_{2}^{2}=\partial_{1,1}+( x_{1}\partial_{x_{2}}+x_{2}\partial_{x_{3}}+\ldots +x_{n-1}\partial_{x_{n}})^{2}.$$
Condition (H.2) holds because
\begin{align*}
\partial_{x_{1}}  &  =X_{1}\\
\partial_{x_{2}}  &  =\left[  X_{1},X_{2}\right] \\
&  \vdots\\
\partial_{x_{n}}  &  =\left[  \left[ \left[  X_{1},X_{2}\right]  ,X_{2}\right],  \ldots,X_{2}\right] \quad \text{(bracket of length $n$).}
\end{align*}
\end{example}
 Let $\Omega\subseteq\R^n$ be an open set. Following the notation in \eqref{notazioneXI}, the Sobolev spaces with respect to the system of vector fields $X$ are defined,
  for $p\in (1,\infty)$ and $k\in \mathbb{N}\cup\{0\}$, by setting
 $$W_{X}^{k,p}(\Omega):=
  \Big\{u\in L^{p}(\Omega):\,X_{I}u\in L^{p}(\Omega),\,\,\text{for any $I$  with $|I|\leq k$}\Big\},$$
 endowed with the norm
 $$
 \Vert u\Vert_{W_{X}^{k,p}(\Omega)}:=
  \Vert u\Vert_{L^{p}(\Omega)}+\sum_{\vert I\vert \leq k}\Vert X_{I}u\Vert_{L^{p}(\Omega)}.
$$
 Here the derivatives $X_{I}u$ exist, \emph{a priori}, in the weak sense at least.
 When $k=0$, it is understood that $X_Iu=u$ for any multi-index with $|I|\leq 0$, so that
  $(W_{X}^{k,p}(\Omega ), \Vert \cdot\Vert_{W_{X}^{k,p}(\Omega )})$
  is just the usual normed space $(L^p(\Omega), \Vert \cdot\Vert_{L^p(\Omega )})$.\medskip

 We are interested in establishing global regularity results in the scale of
 these Sobolev spaces for homogeneous sums of squares $\LL$. Namely, our main result is
 the following:

\begin{theorem}[Global regularity for homogeneous sums of squares]\label{Ch8-Thm Sobolev Caso A}
 Let $\LL$ be as above, under assumptions
 \emph{(H.1)-(H.2)} on the vector fields $X_1,\ldots,X_m$.

 Let also $p\in( 1,\infty) $ and let $k$ be a nonnegative integer.
 Then, there exists $\Lambda = \Lambda_{k,p}>0$
 such that, if $u\in L^{p}(\mathbb{R}^{n}) $ and $Lu\in W_{X}^{k,p}(\mathbb{R}^{n})$
  \emph{(}which means that the distribution $\LL u$ can
  be identified with a function in $W_{X}^{k,p}(\mathbb{R}^{n})$\emph{)},
  then $u\in W_{X}^{k+2,p}(\mathbb{R}^{n})$ and
\begin{equation}\label{Ch8-Global estimate}
 \|u\|_{W_{X}^{k+2,p}(\mathbb{R}^{n})}\leq
 \Lambda_{k,p}\,\Big(\|\LL u\|_{W_{X}^{k,p}(\mathbb{R}^{n})}
 + \|u\|_{L^{p}(\mathbb{R}^{n})}\Big).
\end{equation}
\end{theorem}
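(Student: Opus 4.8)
The plan is to bootstrap from the known \emph{local} regularity theory for general H\"ormander sums of squares (the classical interior a priori estimates of Rothschild--Stein / Folland type) and to use the two extra structural ingredients at our disposal: the $\dela$-homogeneity of the $X_j$'s and a global lifting technique for homogeneous vector fields. The local estimates say that for every pair of concentric balls $B' \Subset B$ (in any reasonable quasi-metric associated to $X$, e.g. the Carnot--Carath\'eodory metric, whose existence is guaranteed by Remark~\ref{hormandereverywhere}) there is a constant $C = C(B',B,k,p)$ with
\begin{equation*}
 \|u\|_{W_X^{k+2,p}(B')} \leq C\,\big(\|\LL u\|_{W_X^{k,p}(B)} + \|u\|_{L^p(B)}\big),
\end{equation*}
valid a priori for $u\in C^\infty$ and then, by the hypoellipticity already recorded in the excerpt, for every $u\in L^p$ with $\LL u\in W_X^{k,p}$ locally. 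The difficulty is purely global: one must patch the local estimates over all of $\R^n$ with a constant that does not blow up, and the naive covering argument fails because $\R^n$ is noncompact and balls of large radius carry large local constants.

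\textbf{Step 1: reduce to a scaling argument via homogeneity.} Fix a ``unit'' ball $B_1 = B(0,1)$ and a slightly smaller $B_{1/2}=B(0,1/2)$ and record the local estimate there with a fixed constant $C_0=C_0(k,p)$. For a general point $x_0\in\R^n$, choose $\lambda = \lambda(x_0)>0$ so that $\delta_\lambda(x_0)$ lands in a fixed compact region (or conversely dilate the configuration to unit scale). Because each $X_j$ is $\dela$-homogeneous of degree $1$, if we set $u_\lambda := u\circ\dela$ then $X_I(u\circ\dela) = \lambda^{|I|}(X_I u)\circ\dela$ and $\LL(u\circ\dela)=\lambda^2(\LL u)\circ\dela$; moreover the $X$-balls transform as $\dela^{-1}(B(y,r)) = B(\dela^{-1}(y), \lambda^{-1} r)$ since $\dela$ acts as an isometry-up-to-scaling on the Carnot--Carath\'eodory structure (this is where \eqref{equidellahomogvf} is used). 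Applying the fixed unit-scale estimate to $u_\lambda$ and changing variables back, the factors of $\lambda$ from the $X_I$'s, from $\LL$, and from the Jacobian $|\det D\dela| = \lambda^{q}$ must be tracked; the key point is that the homogeneity makes the resulting constant \emph{independent of $x_0$} once we measure everything in the correct $\lambda$-weighted norms. This yields a scale-invariant local estimate on every $X$-ball, with a single constant.

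\textbf{Step 2: handle the geometry at infinity via global lifting.} The obstruction to a direct global patching is that at points $x_0$ far from the ``center'' the dilation $\lambda(x_0)$ needed to normalize the ball may degenerate, and more seriously the $X_j$'s may fail to span pointwise (cf. the footnote's warning that linear independence as vector fields is weaker than pointwise independence). Here one invokes the global lifting technique for homogeneous vector fields: there is a larger space $\R^N$ with new coordinates, a family of dilations, and lifted vector fields $\widetilde X_1,\ldots,\widetilde X_m$ that are \emph{free up to some step} and $\dela$-homogeneous, such that $X_j$ is the projection of $\widetilde X_j$, and — crucially for a \emph{global} statement — the lifting is valid on all of $\R^N$, not merely locally as in Rothschild--Stein. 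On the lifted space the $\widetilde X_j$'s generate (a quotient of) a stratified Lie group, where global convolution estimates with homogeneous kernels and the global Folland--Stein theory apply directly. One then transfers the $W^{k+2,p}$ estimate from the group down to $\R^n$ via the projection, controlling the extra integration in the lifted variables (a saturation/averaging argument), and back-substitutes into Step~1.

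\textbf{Step 3: assemble the global estimate.} Cover $\R^n$ by a countable family of $X$-balls $\{B(x_i, r_i)\}$ with bounded overlap, obtained from the scale-invariant estimate of Step~1, take $p$-th powers, sum, and use the bounded-overlap property to absorb the sum of the right-hand sides into $\|\LL u\|_{W_X^{k,p}(\R^n)}^p + \|u\|_{L^p(\R^n)}^p$; a standard iteration/absorption trick removes the intermediate-order $X$-derivatives of $u$ on the right (an interpolation inequality of the type advertised in the keywords). The fact that the constant $\Lambda_{k,p}$ from Step~1 is uniform in $x_i$ is exactly what makes the sum converge with a finite constant, giving \eqref{Ch8-Global estimate}; the membership $u\in W_X^{k+2,p}(\R^n)$ follows since each term is finite. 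I expect the main obstacle to be Step~2: making the lifting genuinely global (rather than local) and carrying out the transfer of $L^p$ Sobolev estimates between the lifted group and $\R^n$ uniformly — in particular checking that the homogeneity of the $X_j$'s survives the lift and is compatible with the dilation on $\R^N$, so that the scaling bookkeeping of Step~1 can be run upstairs and then pushed down without losing uniformity in the base point.
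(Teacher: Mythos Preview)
Your list of ingredients is right (local Rothschild--Stein estimates, $\dela$-homogeneity, global lifting, interpolation), but the way you assemble them differs from the paper and contains a real gap.

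\textbf{Where the paper's route is simpler.} There is no covering argument. The paper applies the local estimate \emph{once}, on the fixed pair $B_1(0)\Subset B_2(0)$, to the rescaled function $v_R:=u\circ\delta_R$, and uses $X_I(u\circ\delta_R)=R^{|I|}(X_Iu)\circ\delta_R$ together with the Jacobian $R^{q}$. After cancellation one gets
\[
\|D^{i+2}u\|_{L^p(B_R)}\;\le\;c_{i,p}\Big(\textstyle\sum_{j\le i}R^{\,j-i}\|D^j(\LL u)\|_{L^p(B_{2R})}+R^{-i-2}\|u\|_{L^p(B_{2R})}\Big),
\]
and letting $R\to\infty$ kills every term on the right except the top one, yielding $\|D^{i+2}u\|_{L^p(\R^n)}\le c\,\|D^i(\LL u)\|_{L^p(\R^n)}$ directly. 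The single ball $B_R(0)$ exhausts $\R^n$; no bounded-overlap family is needed. Your Step~1 tries instead to normalize balls centered at an arbitrary $x_0$ by choosing $\lambda=\lambda(x_0)$; but $\dela$ fixes only the origin, so $\dela$ sends $B(x_0,r)$ to $B(\dela(x_0),\lambda r)$, and the center does \emph{not} land in a fixed compact set unless you also tie $r$ to $\|x_0\|$. That forces a Whitney-type covering with variable radii and leaves the negative powers of $r$ on the lower-order terms to be controlled---all of which the paper's ``dilate one ball at the origin'' trick sidesteps.

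\textbf{The gap in Step 2.} You propose to run the full global Folland--Stein estimate on the lifted Carnot group and then push it down by a ``saturation/averaging argument''. This does not work as stated: the lift $\widetilde u(x,\xi)=u(x)$ is constant in $\xi$, hence never in $L^p(\R^N)$, so the \emph{global} group estimate cannot be applied to it. The paper uses the lifting much more modestly: only on a \emph{bounded} ball $\widetilde B_1(0)\subset\R^N$, where $\widetilde u$ is integrable, and only to import the interpolation inequality $\|\widetilde X_i v\|_{L^p}\le\sigma\|\widetilde X_i^2 v\|_{L^p}+\tfrac{c}{\sigma}\|v\|_{L^p}$ from the group. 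That local inequality is then projected to $\R^n$ (Lemma~\ref{Prop projected norms}) and globalized by the same $\delta_R$-rescaling and $R\to\infty$ trick, giving the global interpolation inequality (Proposition~\ref{Prop global interpolation}). The latter is what supplies the missing first-order term $\|Du\|_{L^p(\R^n)}$ and, in its local form with cut-offs at the origin, drives the regularity half of the theorem (your outline does not separate the a~priori estimate for $u\in W^{k+2,p}_X$ from the harder assertion that $u$ actually lies in $W^{k+2,p}_X$).
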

 \noindent This theorem will be proved in section 3, throughout Theorems
 \ref{Thm global estimates} and \ref{Thm regularity}.\medskip

 Theorem \ref{Ch8-Thm Sobolev Caso A} is well known if the sum of squares $\LL$ is not just $\dela$-homogeneous of degree $2$,
 but also left invariant with respect to a Lie group operation; more precisely,
 if $\LL$ is a \emph{sub-Laplacian on a Carnot group}: in this case the above
 result is due to Folland, see \cite[Thm. 6.1]{Fo2}. Let us review the
 definition of this key concept, since it will play an important role in the following:
\begin{definition}
 We say that $\mathbb{G}=(\mathbb{R}^{N},\ast,D_{\lambda}) $ is a
 (homogeneous) Carnot group if:
 \begin{enumerate}
   \item $\ast$ is a Lie group operation in $\mathbb{R}^{N}$ (that we qualify as \textquotedblleft translations\textquotedblright)
   and, for some fixed positive integer exponents $\alpha_{1},\ldots ,\alpha_{N}$, the maps
  $$ D_{\lambda}( x) =( \lambda^{\alpha_{1}}x_{1},\ldots ,
  \lambda^{\alpha_{N}}x_{N})\quad \text{for $\lambda>0$}$$
 form a family of group automorphisms (that we qualify as \textquotedblleft dilations\textquotedblright).\medskip

   \item Let $X_{i}$ (for $i=1,2,\ldots ,N$) be the only left invariant vector field
   which agrees with $\partial_{x_{i}}$ at the origin; moreover, let $H$ be the set of the vector fields among
   $X_{1},\ldots ,X_{N}$ which are $D_{\lambda}$-homogeneous of degree $1$; then the set $H$ satisfies
  H\"{o}rmander's condition at the origin (hence, by left-invariance, at every point of $\mathbb{R}^{N}$).
 \end{enumerate}
 In this case, if $H=\{Z_1,\ldots,Z_m\}$, the sub-Laplacian operator on $\G$ defined by
 $\Delta_\G=\sum_{j=1}^{m} Z_{j}^{2}$
 is $D_\lambda$-homogeneous of degree $2$, left invariant, and $C^\infty$-hypoelliptic.
\end{definition}
 For a technical reason that will become apparent in a moment (see \eqref{campisiliftanoomogenicosa}),
 we do not require that the exponents $\alpha_k$'s of the dilations $D_\lambda$ be increasingly ordered
 (as is done e.g., in \cite{BLUlibro}).\medskip

 In the more general case of the so-called \textquotedblleft sums of squares
 of H\"{o}rmander's vector fields\textquotedblright, defined on some domain
 $\Omega\subseteq\mathbb{R}^{n}$ but not necessarily homogeneous with respect
 to any family of dilations, nor necessarily left invariant with respect to any
 Lie-group translations, a regularity result such as Theorem \ref{Ch8-Thm Sobolev Caso A} is known only in
 a local form. Namely, Rothschild-Stein  proved the following:
\begin{ThmA}[Interior regularity for H\"ormander sum of squares, \protect{\cite[Thm. 16]{RS}}]
 Let
 $X_{1},\ldots ,X_{m}$ be a system of smooth vector fields satisfying
 H\"{o}rmander's condition in some domain $\Omega\subseteq\mathbb{R}^{n}$, and
 let $L=\sum_{i=1}^{m}X_{i}^{2}$. Finally, let $k$ be a nonnegative integer and $p\in( 1,\infty)$.\medskip

 Then the following facts hold:
 \begin{itemize}
  \item[{(i)}] if $u$ is
  any distribution in $\Omega$ with $Lu\in
 W_{X}^{k,p}( \Omega)$, then $u\in W_{X,\mathrm{loc}}^{k+2,p}(\Omega)$;
 \item[{(ii)}] for any domains $\Omega'\Subset\Omega''\Subset\Omega$, it is possible to find
 a constant $c_{k,p} > 0$ such that
 \begin{equation}\label{local a priori}
 \|u\|_{W_{X}^{k+2,p}(\Omega')}\leq
 c_{k,p}\Big\{\| L u\|_{W_{X}^{k,p}(\Omega'')}+
  \|u\|_{L^{p}(\Omega'')}\Big\},
\end{equation}
for every distribution $u$ in $\Omega$ with $Lu\in W^{k,p}_X(\Omega)$.
 \end{itemize}
\end{ThmA}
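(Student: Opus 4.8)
The plan is to carry out the Rothschild-Stein \emph{lifting and approximation} construction, whose core is a local parametrix for $L$: we shall produce, on a neighbourhood of each point of $\Omega$, an operator $K$ together with a one-order smoothing operator $E$ such that $LK=I+E$ and $K$ maps $W_{X}^{k,p}\to W_{X}^{k+2,p}$ locally, for every admissible $k,p$. Granting this, both conclusions are immediate: for a distribution $u$ with $Lu\in W_{X}^{k,p}(\Omega)$, the parametrix identity (valid modulo smoothing, after the usual cut-offs) expresses $u$ in terms of $K(Lu)$ and lower-order remainders, so $u\in W_{X,\loc}^{k+2,p}(\Omega)$, which is (i); inserting cut-offs supported between $\Omega'$ and $\Omega''$ and using the operator bounds gives the quantitative inequality \eqref{local a priori}, which is (ii). Thus everything reduces to building $K$ and controlling its kernel.

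\emph{Step 1: Lifting.} By the Rothschild-Stein lifting theorem, around each $\xi_{0}\in\Omega$ one adjoins variables $t\in\R^{p}$ and obtains, on a neighbourhood of $(\xi_{0},0)$ in $\R^{N}$ with $N=n+p$, smooth vector fields $\widetilde X_{j}=X_{j}+\sum_{\ell}a_{j\ell}(x,t)\,\partial_{t_{\ell}}$ which are \emph{free up to step} $r$, where $r$ is the step of H\"ormander's condition, and which project onto the $X_{j}$. The lifted operator $\widetilde L=\sum_{j}\widetilde X_{j}^{\,2}$ projects onto $L$; applied to $\widetilde u(x,t)=u(x)\,\chi(t)$ with $\chi$ a cut-off it gives $\widetilde L\widetilde u=(Lu)\,\chi+R$, where $R$ collects lower-order terms built from $\partial_{t}\chi$, and integrating in $t$ over the support of $\chi$ one checks that the $X$-Sobolev norms of $u$ are dominated by the $\widetilde X$-Sobolev norms of $\widetilde u$. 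Hence it is enough to build a parametrix for $\widetilde L$, i.e.\ to prove \eqref{local a priori} with $X$ replaced by the free system $\widetilde X$.

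\emph{Step 2: Approximation and the model operator.} By the Rothschild-Stein approximation theorem, in suitable coordinates $\Theta_{\xi_{0}}$ adapted to a basis of iterated brackets one has $\widetilde X_{j}=Y_{j}+R_{j}$, where $Y_{1},\ldots,Y_{m}$ are the canonical generators of the free nilpotent Lie algebra of step $r$ on $m$ generators --- the Lie algebra of a homogeneous Carnot group $\G$ with dilations $D_{\lambda}$ --- and each $R_{j}$ is an error vector field of strictly lower homogeneous order with respect to $D_{\lambda}$. The model operator $\Delta_{\G}=\sum_{j}Y_{j}^{\,2}$ is left invariant, $D_{\lambda}$-homogeneous of degree $2$, with homogeneous dimension $Q>2$ (the excluded case $Q=2$ being the classical elliptic one); by Folland's theorem it admits a global fundamental solution $\Gamma$, smooth off the identity and $D_{\lambda}$-homogeneous of degree $2-Q$. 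Transplanting $\Gamma$ by $\Theta_{\xi_{0}}$ and convolving on the right defines $K$ with $\widetilde L K=I+E$, the lower-order nature of the $R_{j}$'s forcing the kernel of $E$ to be one degree less singular, so that $E$ gains a derivative. The $L^{p}$ and $W_{\widetilde X}^{k,p}$ mapping properties of $K$ (and of the further error operators arising on iteration) follow from Calder\'on-Zygmund theory on $\G$ --- transported by $\Theta_{\xi_{0}}$ to the base --- viewed as a space of homogeneous type, together with the cancellation inherited from the group convolution (reconciling these estimates, naturally phrased in a scale of non-isotropic Sobolev spaces, with the integer-order spaces $W_{X}^{k,p}$ requires in addition a set of commutator identities). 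A finite cover of $\overline{\Omega'}$ by such coordinate patches then yields the estimate for $\widetilde L$, hence, descending via Step 1, for $L$.

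The main obstacle lies in Step 2: proving the approximation theorem and the ensuing parametrix estimates \emph{uniformly in the base point} $\xi_{0}$ --- that is, quantifying the exact sense in which $R_{j}$, and hence the kernel of $E$, is of lower order, and establishing the $L^{p}$-boundedness of the non-convolution singular integrals that arise, which requires the full Calder\'on-Zygmund apparatus on spaces of homogeneous type plus the structural cancellation coming from $\G$. Folland's analysis of $\Delta_{\G}$ on the free nilpotent group (existence and exact homogeneity of $\Gamma$) is the other indispensable ingredient; by contrast the lifting theorem of Step 1, though technically delicate, is comparatively soft.
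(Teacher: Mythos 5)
Theorem A is not proved in this paper: it is imported verbatim from Rothschild--Stein \cite[Thm.\ 16]{RS} and used as a black box (in the proofs of Lemma \ref{lem.Dksobolev} and Theorem \ref{Thm regularity}). There is therefore no in-paper proof to compare your attempt against. Your sketch is, however, a faithful outline of the Rothschild--Stein argument: local lifting to a free system, the approximation theorem reducing $\widetilde L$ to the sub-Laplacian on a free nilpotent Carnot group, Folland's fundamental solution on that group to build the parametrix, Calder\'on--Zygmund theory on a space of homogeneous type for the $L^p$ bounds, and commutator identities to move from the non-isotropic Sobolev scale back to the integer spaces $W_X^{k,p}$. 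You correctly flag the two hard ingredients (uniformity of the approximation error in the base point, and the non-convolution singular integral estimates) and correctly call the lifting ``comparatively soft.'' One point worth making sharper: the passage from the basic case $k=0$ to general $k$ is where most of the labour in \cite{RS} is concentrated, and the commutator bookkeeping you allude to in one clause is not a small add-on; the authors of the present paper note in a footnote that precisely this part of the theory is still missing for operators with a drift term, which is an indication of how delicate it is. It is also worth contrasting your lifting with the one the paper actually uses in its own arguments: Theorem \ref{Thm lifting} (from \cite{BiBo}) is a \emph{global} lifting, valid because of the dilation-homogeneity hypothesis (H.1), and lands on a genuine (not merely approximating) Carnot group --- a much stronger conclusion than the local Rothschild--Stein lifting, available only under the extra homogeneity assumption that Theorem A does not impose.
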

 Incidentally, we note that for general H\"{o}rmander operators
 $\sum_{i=1}^{m}X_{i}^{2}+X_{0}$ with drift term $X_0$ (with $X_{0},X_{1},\ldots ,X_{m}$
 satisfying H\"{o}rmander's condition in $\Omega$),
 only the basic estimate (\ref{local a priori}) for $k=0$ is known, while a
 complete regularity theory in the scale of Sobolev spaces $W_{X}^{k,p}$ is so far lacking.\footnote{Rothschild-Stein
 \cite{RS} state the result, but with no proof, and the methods in \cite{RS} do not seem to adapt easily to the drift case.
 We have not been able to locate any proof of Theorem A for $\sum_{i=1}^{m}X_{i}^{2}+X_{0}$ in the existing literature.}\medskip

 Coming back to the case of the sums of squares  $L=\sum_{i=1}^m X_i^2$,
 if the vector fields $X_{1},\ldots ,X_{m}$
 satisfy H\"{o}rmander's condition in $\Omega=\mathbb{R}^{n}$, it is quite
 natural to ask whether the result of Theorem A can
 be improved to that of Theorem \ref{Ch8-Thm Sobolev Caso A} without assuming
 the Carnot group structure. However, only a few results in this direction seem
 to be known, so far. Bramanti, Cupini, Lanconelli, Priola in \cite{BCLP} have
 studied a class of Ornstein-Uhlenbeck operators of the kind
 $$
 L u=\sum_{i,j=1}^{m}a_{i,j}u_{x_{i},x_{j}}+\sum_{i,j=1}^{N}b_{i,j}x_{i}u_{x_{j}}\quad \text{in $\R^N$},
 $$
 with $m<N$, $(a_{i,j}) _{i,j=1}^{m}$ a constant, symmetric,
 positive-definite matrix, and $(b_{i,j}) _{i,j=1}^{N}$ a constant
 matrix satisfying a suitable structure assumption. This operator can be
 rewritten in the form of a H\"{o}rmander operator
 $L u=\sum_{i=1}^{m}X_{i}^{2}u+X_{0}$ on the whole of $\mathbb{R}^{N}$; however, this $L$ is neither left invariant nor
 (in general) homogeneous with respect to any family of dilations. For these
 operators the following global estimates are proved (just in the basic case $k=0$)
$$
 \sum_{i=1}^{m}\Vert u_{x_{i},x_{j}}\Vert _{L^{p}(\mathbb{R}^{N}) }+
  \Vert X_{0}u\Vert _{L^{p}(\mathbb{R}^{N}) }\leq c\left\{  \Vert L u\Vert _{L^{p}(\mathbb{R}^{N}) }+
   \Vert u\Vert _{L^{p}( \mathbb{R}^{N}) }\right\},\quad \text{for $1<p<\infty$.}$$
   Apart from this result, and its extension to continuous
 variable coefficients $a_{i,j}$ contained in \cite{BCLP2}, no global Sobolev
 estimates for classes of H\"{o}rmander operators which do not fulfill
 Folland's assumptions of both left-invariance and homogeneity seem to be known.

 Therefore the present result Theorem \ref{Ch8-Thm Sobolev Caso A} seems to be interesting in its own right,
 although its proof is not difficult. The simple idea is to apply Rothschild-Stein's local Sobolev estimates,
 and then to exploit the dilations to get global ones. In doing this, however, one also requires some global
 interpolation inequalities for Sobolev norms, which are so far available in the case of Carnot groups only.
 Establishing these inequalities in the present context is possible in view of some deep
 result dealing with a global lifting of homogeneous vector fields to a higher dimensional Carnot group.
 This lifting result is a powerful tool, first developed by Folland \cite{Fo3} and, in the form that we actually need, by
 two of us, \cite{BiBo}.
 We start (in Section \ref{sec:liftingandintepolation}) by reviewing this lifting procedure, then we establish
 suitable interpolation inequalities, and finally (in Section \ref{sec:mainproof}) we prove our main result.

\section{Lifting and interpolation inequalities}\label{sec:liftingandintepolation}

 The following result is proved in \cite{BiBo}, by using Folland's lifting in \cite{Fo3} plus a convenient
 change of variable turning the lifting into an explicit projection.
\begin{theorem}[Global Lifting]\label{Thm lifting}
 Assume that $X=\{X_{1},\ldots,X_{m}\}$ satisfy
 \emph{(H.1)} and \emph{(H.2)}. Let $N:=\dim(\Lie\{X\})$. We denote the points of $\mathbb{R}^{N}
 \equiv\mathbb{R}^{n}\times\mathbb{R}^{s}$ by $(x,\xi)$ \emph{(}if $N=n$, we agree that the $\xi$ variable does not appear\emph{)}. Then, the following facts hold:
\begin{enumerate}
  \item[\emph{(1)}]  There exist a Carnot group $\mathbb{G}=(\mathbb{R}^{N},\ast,D_{\lambda})$
 and a system $\{\widetilde{X}_{1},\ldots,\widetilde{X}_{m}\}$ of
 Lie-generators of $\mathrm{Lie}(\mathbb{G})$ such that $\widetilde{X}_{i}$ is
 a lifting of $X_{i}$ for every $i=1,\ldots,m$, that is:
\begin{equation}\label{campisiliftano}
 \widetilde{X}_{i}(x,\xi)=X_{i}(x)+R_{i}(x,\xi),
\end{equation}
 where $R_{i}(x,\xi)$ is a smooth vector field operating only in the variable
 $\xi\in\mathbb{R}^{s}$, with coefficients possibly depending on $(x,\xi)$.\medskip

  \item[\emph{(2)}]
  The dilations $\left\{  D_{\lambda}\right\}  _{\lambda>0}$ \emph{(}which
 make the $\widetilde{X}_{i}$'s homogeneous of degree $1$\emph{)} and the dilations
 $\left\{\delta_{\lambda}\right\}_{\lambda>0}$ \emph{(}which make the $X_{i}$'s homogeneous
 of degree $1$\emph{)} are related as follows:
\begin{equation}\label{campisiliftanoomogenicosa}
 D_{\lambda}( x,\xi) =( \delta_{\lambda}( x),\delta^*_{\lambda}( \xi) ),
\end{equation}
 with $\delta^*_{\lambda}( \xi) =( \lambda^{\tau_{1}}\xi_{1},\ldots ,\lambda^{\tau_{s}}\xi_{s})$,
 for suitable integers $\tau_{s}\geq\cdots \geq\tau_{1}\geq 1$.
\end{enumerate}
\end{theorem}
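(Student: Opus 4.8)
The plan is to construct $\G$ from the abstract structure of the graded Lie algebra $\g := \Lie(X)$, and then to choose coordinates on $\G$ making the lifting into the bare projection $(x,\xi)\mapsto x$; this is Folland's lifting \cite{Fo3}, made explicit and globalized by means of the homogeneity.

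\textit{Step 1: the group.} First I would record that $\g := \Lie(X)$ is a finite-dimensional, nilpotent, \emph{graded} Lie algebra. By (H.1) each iterated bracket $X_{[I]}$ is $\dela$-homogeneous of degree $|I|$, and writing $X_{[I]}=\sum_k c_k(x)\,\de_{x_k}$ the coefficient $c_k$ is a $\dela$-homogeneous polynomial of degree $\sigma_k-|I|$; hence $c_k\equiv 0$ whenever $|I|>\sigma_k$, and since $\sigma_k\leq\sigma_n$ every bracket of length $>\sigma_n$ vanishes. Thus $\g=\bigoplus_{j\geq 1}\g_j$, with $\g_j$ the space of degree-$j$ homogeneous vector fields of $\g$; moreover $X_1,\dots,X_m\in\g_1$, the subspace $\g_1$ generates $\g$, and $N:=\dim\g<\infty$. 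Let $\G$ be the connected, simply connected, nilpotent Lie group with Lie algebra $\g$: then $\Exp:\g\to\G$ is a global diffeomorphism, the grading induces a one-parameter family $\{D_\lambda\}_{\lambda>0}$ of automorphic dilations of $\G$, and the left-invariant vector fields $\widetilde X_1,\dots,\widetilde X_m$ extending $X_1,\dots,X_m\in\g_1$ are $D_\lambda$-homogeneous of degree $1$ and Lie-generate $\Lie(\G)$. In the exponential coordinates of Step 2, $(\R^N,\ast,D_\lambda)$ will be a Carnot group in the sense of the definition above, the generating set $H$ of its group structure being a basis of $\g_1$ (of which $\widetilde X_1,\dots,\widetilde X_m$ form, in general, a different spanning set).

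\textit{Step 2: the explicit projection.} This is Folland's lifting, which I would put into coordinate form as follows. The subspace $\frk k:=\{Y\in\g:Y(0)=0\}$ is graded: the homogeneous components $Y_d\in\g_d$ of a given $Y\in\g$ have values $Y_d(0)$ lying in the pairwise independent subspaces $\Span\{\de_{x_k}:\sigma_k=d\}$, whence $Y(0)=0$ if and only if $Y_d(0)=0$ for every $d$. I would choose homogeneous $W_1,\dots,W_s$ ($s:=N-n$) spanning $\frk k$, listed by non-decreasing degree $\tau_1\leq\cdots\leq\tau_s$, and homogeneous $Z_1,\dots,Z_n\in\g$ whose values $Z_1(0),\dots,Z_n(0)$ form a basis of $\R^n$, listed by non-decreasing degree; the same independence remark forces $\deg Z_j=\sigma_j$ for every $j$. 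The map $G(t_1,\dots,t_n):=\phi^{Z_n}_{t_n}\circ\cdots\circ\phi^{Z_1}_{t_1}(0)$, where $\phi^Z_t$ is the flow of $Z$ (which is complete, again thanks to homogeneity), has differential at $0$ equal to the matrix with columns $Z_1(0),\dots,Z_n(0)$, hence is a local diffeomorphism near $0$; by \eqref{equidellahomogvf} it satisfies $G(\lambda^{\sigma_1}t_1,\dots,\lambda^{\sigma_n}t_n)=\dela(G(t_1,\dots,t_n))$, so, arguing exactly as in Remark \ref{hormandereverywhere}, $G$ is a \emph{global} diffeomorphism of $\R^n$. Now take as chart on $\G$ the map sending $(x,\xi)$ to $\exp(\xi_1\widetilde W_1)\ast\cdots\ast\exp(\xi_s\widetilde W_s)\ast\exp(t_1\widetilde Z_1)\ast\cdots\ast\exp(t_n\widetilde Z_n)$, where $(t_1,\dots,t_n):=G^{-1}(x)$. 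In these coordinates, the submersion $\pi:\G\to\R^n$ supplied by Folland's lifting (the orbit map $g\mapsto 0\cdot g$ of the action integrating $\widetilde X_i\mapsto X_i$) becomes the standard projection $\pi(x,\xi)=x$: the flows along the $\widetilde W_k$ leave $0$ fixed since $W_k(0)=0$, the flows along the $\widetilde Z_j$ rebuild $G\big(G^{-1}(x)\big)$, and the last re-coordinatization undoes $G$. Finally, the $\pi$-relatedness $d\pi\circ\widetilde X_i=X_i\circ\pi$, together with $\pi$ being the standard projection, forces $\widetilde X_i(x,\xi)=X_i(x)+R_i(x,\xi)$ with $R_i$ operating only in $\xi$. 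This proves assertion (1).

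\textit{Step 3: the dilations, and the main difficulty.} Assertion (2) is then read off the chart: $D_\lambda$ scales the $\widetilde Z_j$-block by $\lambda^{\deg Z_j}=\lambda^{\sigma_j}$ and the $\widetilde W_k$-block by $\lambda^{\tau_k}$, and, using the equivariance of $G$ to identify the $x$-block, $D_\lambda(x,\xi)=(\dela(x),\delta^*_\lambda(\xi))$ with $\delta^*_\lambda(\xi)=(\lambda^{\tau_1}\xi_1,\dots,\lambda^{\tau_s}\xi_s)$ and $1\leq\tau_1\leq\cdots\leq\tau_s$, which is \eqref{campisiliftanoomogenicosa}. The concatenated exponent list $(\sigma_1,\dots,\sigma_n,\tau_1,\dots,\tau_s)$ need not be monotone — for Grushin's operator (Example \ref{exa.grushin1}) one gets $s=1$ and $\tau_1=1<\sigma_n=2$ — which is exactly why no ordering of the $D_\lambda$-exponents was imposed in the definition of Carnot group. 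The delicate point, which I have so far glossed over, is that Folland's lifting \cite{Fo3} yields the lifted vector fields and the relation \eqref{campisiliftano} only \emph{near the identity of $\G$}; the passage to the \emph{global} statement on $\R^N$ is where homogeneity is indispensable — the equivariances $\pi\circ D_\lambda=\dela\circ\pi$, the $D_\lambda$-homogeneity of the $\widetilde X_i$ and \eqref{equidellahomogvf}, together with the fact that $D_\lambda$ contracts $\G$ to the identity as $\lambda\to 0^+$, allow one to spread a relation valid near $0$ to all of $\R^N$. This globalization, together with the routine verification that the group law stays polynomial and the dilations keep the product form \eqref{campisiliftanoomogenicosa}, is precisely what \cite{BiBo} carries out in detail.
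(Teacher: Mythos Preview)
The paper does not give its own proof of this theorem: it is quoted from \cite{BiBo}, with the one-line summary ``Folland's lifting in \cite{Fo3} plus a convenient change of variable turning the lifting into an explicit projection'' (and Remark~\ref{rem.sulladimenzoinen} handling the degenerate case $N=n$ via \cite{BBCCM}). Your sketch reconstructs exactly this strategy---build the Carnot group on the abstract graded nilpotent algebra $\g=\Lie(X)$, realize Folland's orbit map as the bare projection by choosing exponential-type coordinates adapted to the splitting $\g=\frk k\oplus(\text{complement})$, and use homogeneity (via the equivariances $G\circ\dela=\dela\circ G$ and $\pi\circ D_\lambda=\dela\circ\pi$) to pass from local to global---so your approach coincides with the one the paper invokes.

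Two small remarks. First, your appeal to ``arguing exactly as in Remark~\ref{hormandereverywhere}'' to conclude that $G$ is a \emph{global} diffeomorphism is a slight stretch: that remark only spreads nonsingularity of a matrix, whereas here you also need global injectivity/surjectivity; the homogeneity relation $G\circ\dela=\dela\circ G$ does give this (define the inverse at $x$ by pulling $\dela(x)$ into a small ball where $G^{-1}$ exists and rescaling), but it is worth saying so explicitly. Second, since the paper assumes $X_1,\dots,X_m$ linearly independent and they generate $\g$, one has $\g_1=\Span\{X_1,\dots,X_m\}$ with these vectors a basis; your parenthetical that $\widetilde X_1,\dots,\widetilde X_m$ are ``in general a different spanning set'' from the canonical $H$ is still correct, but only because $H$ is defined via the coordinate partials $\partial_{x_i}$ in the chosen chart rather than intrinsically.
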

\begin{remark}[The case $N=n$]\label{rem.sulladimenzoinen}
 Since $X$ is a H\"ormander system in $\R^n$, one has $N\geq n$.
 As a matter of fact, Theorem \ref{Thm lifting} has been  proved in \cite{BiBo} under the assumption $N>n$. By a recent result in \cite{BBCCM}, Theorem \ref{Thm lifting}
 also holds in the case $N=n$. Indeed, if the latter holds, we have that:
\begin{itemize}
  \item $\Lie\{X\}$ is an $n$-dimensional Lie algebra of analytic vector fields in $\R^n$ (analyticity follows from the fact that the $X_j$'s have
  polynomial component functions, due to (H.1));

  \item $X$ is a H\"ormander system, due to (H.2) (see also Remark \ref{hormandereverywhere});

  \item any vector field $Y\in \Lie\{X\}$ is complete, i.e., the integral curves of $Y$ are defined on the whole of $\R$
  (this can be easily proved as a consequence of (H.1) and \eqref{notazioneXIprecede}).
\end{itemize}
 Under these three conditions, a result in \cite{BBCCM} proves that $\Lie\{X\}$ coincides with the Lie algebra
 of a Lie group $\G$ on $\R^n$. As a matter of fact, under assumption (H.1), this Lie group $\G$ turns out to be a homogeneous Carnot group
 with dilations $\dela$ (see e.g., \cite[Chapter 16]{BiagiBonfBook}), so that Theorem \ref{Thm lifting} holds without the need to perform any further lifting.
\end{remark}
\begin{remark}[Rothschild-Stein's lifting vs.\,\,\,Folland's lifting]
 The first famous result
 about the lifting of vector fields was proved by Rothschild-Stein in
 \cite{RS}. They showed that every system of H\"{o}rmander's vector fields can
 be lifted, locally, to a higher dimensional system of free H\"{o}rmander's
 vector fields, which can be locally approximated, in a suitable sense, by the
 generators of a Carnot group. In the above Theorem \ref{Thm lifting}, instead, the initial
 system is directly lifted to the generators of a Carnot group $\G$, the process
 being performed globally, while $\G$ needs not be a free group. These advantages are made possible by the
 homogeneity of the original vector fields.
\end{remark}
\begin{example}\label{sec.th:ex_Grushin}
   Let us consider the vector fields $X_1,X_2$ in Example \ref{exa.grushin1}.
   The associated Carnot group according to Theorem \ref{Thm lifting} is $\G = (\R^3,\ast,D_\lambda)$ with
\begin{equation*}
    D_\lambda(x_1,x_2,\xi_1) = (\lambda x_1,\lambda^2 x_2, \lambda\,\xi_1),
\end{equation*}
  while the composition law is
  \begin{equation*}
   (x_1,x_2,\xi_1)\ast(x'_1,x'_2,\xi'_1) = (x_1+x'_1, x_2+x'_2+x_1\xi'_1, \xi_1+\xi'_1).
  \end{equation*}
  Furthermore, the vector fields $\widetilde{X}_1,\widetilde{X}_2$ lifting $X_1$ and $X_2$ are
  \begin{equation} \label{sec.th:exGrushinZs}
   \widetilde{X}_1 = \de_{x_1}, \qquad \widetilde{X}_2 = x_1\,\de_{x_2}+\de_{\xi_1}.
  \end{equation}
  The operator $\LL = X_1^2+X_2^2$ lifts to
  the sub-Laplacian $\Delta_\G = \widetilde{X}_1^2+\widetilde{X}_2^2$.
  The latter is (modulo a change of variable)
  the Kohn-Laplacian on the first Heisenberg group.
 \end{example}
 \begin{example}\label{sec.th:exGrushinII}
   Let us consider the vector fields $X_1,X_2$ in Example \ref{exa.grushin2},  in the case when $k=2$.
   The associated Carnot group according to Theorem \ref{Thm lifting} is $\G = (\R^4,\ast,D_\lambda)$ with
 \begin{equation*}
    d_\lambda(x_1,x_2,\xi_1,\xi_2)
    = (\lambda x_1,\lambda^3 x_2, \lambda\,\xi_1,\lambda^2\,\xi_2),
  \end{equation*}
 and the composition law $(x_1,x_2,\xi_1,\xi_2)\ast(x'_1,x'_2,\xi'_1,\xi'_2)$ is
 \begin{equation*}
  \begin{split}
    \Big(x_1+x'_1, x_2 + x'_2 + x_1(x_1+x'_1)\xi'_1+2x_1\xi'_2,
   \xi_1+\xi'_1, \xi_2+\xi'_2+\tfrac{1}{2}(x_1\xi'_1-x'_1\xi_1) \Big).
   \end{split}
  \end{equation*}
  The vector fields $\widetilde{X}_1,\widetilde{X}_2$ lifting $X_1$ and $X_2$ are
 \begin{equation*}
   \widetilde{X}_1 = \de_{x_1}-\frac{\xi_1}{2}\,\de_{\xi_2},
   \qquad \widetilde{X}_2 = x_1^2\,\de_{x_2}+\de_{\xi_1}+\frac{x_1}{2}\,\de_{\xi_2}.
\end{equation*}
 \end{example}
 Following the notation in Theorem \ref{Thm lifting}, in the lifted space $\mathbb{R}^{N}$ we can consider the Sobolev spaces
 $W_{\widetilde{X}}^{k,p}$, where $\widetilde{X}=\{\widetilde{X}_1,\ldots,\widetilde{X}_m\}$. On the other hand, when $\widetilde{X}_{i}$ acts on
 a function $f$ only depending on the variables $x$, one simply gets
$$
\widetilde{X}_{i}f( x) =X_{i}f( x),\quad i=1,\ldots,m .
$$
 This suggests that these Sobolev spaces simply project onto the spaces
 $W_{X}^{k,p}$. However, when computing $L^{p}$ norms, some care must be taken
 about the domain of the functions involved.
 In Proposition \ref{Prop projected norms}
 we shall compare $L^{p}$
 norms in suitable balls of the original space and in the lifted variables. Let
 us first fix some notation and basic facts.

 The dilations $\dela$ in $\mathbb{R}^{n}$ induce a \emph{homogeneous norm $\Vert\cdot\Vert$} in
 $\mathbb{R}^{n}$ as follows: by definition, we let $\Vert 0\Vert =0$, and, for every $x\in\mathbb{R}^{n}\setminus\{0\}$, we
 define $\Vert x\Vert $ as the unique positive number such as
 $$\Big|\delta_{1/\Vert x\Vert }( x) \Big|=1,$$
 where $\vert \cdot\vert $ stands for the Euclidean norm. This
 definition makes sense since, for every $x\neq0$, the function
 $(0,\infty)\ni\lambda\mapsto\vert \delta_{\lambda}( x) \vert$
 is continuous, strictly increasing, and its image set is $( 0,\infty) $.
\begin{remark}\label{charatceriz.Vert}
 Let $\mathbb{S}^{n-1}\subset \R^n$ denote, as usual, the unit sphere $\{x\in\R^n:\,|x|=1\}$. Then $\Vert\cdot\Vert$ is characterized by any of the following
 equivalent conditions:
 \begin{enumerate}
   \item for any $\lambda>0$, the level set $\{x\in\R^n:\,\|x\|=\lambda\}$ coincides with $\dela(\mathbb{S}^{n-1})$ (the latter being the
   ellipsoid with semi-axes $\lambda^{\sigma_1},\ldots,\lambda^{\sigma_n}$) which is the set described by the equation
   \begin{equation}\label{eq.ellips}
    \frac{x_1^2}{\lambda^{2\sigma_1}}+\frac{x_2^2}{\lambda^{2\sigma_2}}+\cdots+\frac{x_n^2}{\lambda^{2\sigma_n}}=1;
   \end{equation}

   \item $\Vert \cdot \Vert$ coincides with the unique map $u:\R^n\to [0,\infty)$ which is $\dela$-homogeneous of degree $1$ and such that
   $$u(x) =1\quad \text{if and only if}\quad |x|=1; $$

   \item for any $x\neq 0$, $\|x\|$ is the reciprocal of the unique positive solution $t$ to the algebraic equation
   $$x_1^2\,t^{2\sigma_1}+\cdots+x_n^2\,t^{2\sigma_n}=1; $$

   \item for any $x\neq 0$, $\|x\|$ is the reciprocal of the unique $\lambda>0$ for which the $\dela$-line through $x$, that is the set
   $\{\dela(x):\lambda>0\}$, intersects the sphere $\mathbb{S}^{n-1}$.
  \end{enumerate}
\end{remark}
 \noindent Thus $\Vert \cdot \Vert$ enjoys the following properties:
\begin{align*}
 \Vert x\Vert  &  \geq0 \quad \text{and}\quad ( \Vert x\Vert
 =0\Leftrightarrow x=0),\\
 \Vert \delta_{\lambda}(x) \Vert  &  =\lambda\,
 \Vert x\Vert\quad \text{for every $\lambda>0$ and every $x\in\R^n$.}
\end{align*}
Also, since the exponents $\sigma_{i}$ appearing in the dilations are positive
integers, the function $x\mapsto\Vert x\Vert $ is smooth outside
the origin. (This can be seen by applying the Implicit Function Theorem to the
function $f( \lambda,x) =\vert \delta_{\lambda}(x) \vert ^{2}-1$).

 Analogously we can define in $\mathbb{R}^{N}$ and in $\R^s$ two homogeneous norms by means of the
 dilations $\left\{  D_{\lambda}\right\}  _{\lambda>0}$ and $\left\{  \delta^*_{\lambda}\right\}  _{\lambda>0}$
 introduced in Theorem \ref{Thm lifting}, and these homogeneous norms enjoy
 similar properties of the ones established for the pair $(\R^n,\{\dela\}_{\lambda>0})$.
 By a small abuse of notation we shall denote with the same
 symbol $\Vert \cdot\Vert $ these three homogeneous norms defined in
 $\mathbb{R}^{n}$, $\mathbb{R}^{N}$ and $\mathbb{R}^s$. They are related by the
 following facts (which holds by point 2 in Theorem \ref{Thm lifting}):
\begin{equation}\label{proprietanormette}
 \Vert ( x,\xi) \Vert    \geq\Vert (x,0) \Vert =\Vert x\Vert ;\qquad \Vert ( x,\xi) \Vert   \geq\Vert (0,\xi) \Vert =\|\xi\|.
\end{equation}
 We will define the following balls centered at the origins of $\R^n$, $\R^N$ and $\R^s$ respectively:
\begin{align*}
 B_{r}( 0)  &=\left\{  x\in\mathbb{R}^{n}:\Vert x\Vert <r\right\},\\
 \widetilde{B}_{r}( 0)  &=\left\{  (x,\xi)\in\mathbb{R}^{N}:\Vert( x,\xi) \Vert <r\right\},\\
 B^*_{r}( 0)  &=\left\{  \xi\in\mathbb{R}^{s}:\Vert \xi\Vert <r\right\},
\end{align*}
 and we note that, due to \eqref{proprietanormette},
 $B_{r}( 0)$ is the projection of $\widetilde{B}_{r}(0)$ via the canonical projection of $\R^N=\R^n\times \R^s$ onto $\R^n$.
 It is not difficult to prove that
\begin{align}
    {B}_{r}(0)&=\bigg\{x\in\R^n:\,\,\frac{x_1^2}
    {r^{2\sigma_1}}+\cdots+\frac{x_n^2}{r^{2\sigma_n}}<1\bigg\}, \label{relazionecartespalle2}\\
    \widetilde{B}_{r}(0)&=\bigg\{(x,\xi)\in\R^n\times\R^s:\,\,
    \frac{x_1^2}{r^{2\sigma_1}}+\cdots+\frac{x_n^2}{r^{2\sigma_n}}
    +\frac{\xi_1^2}{r^{2\tau_1}}+\cdots+\frac{\xi_s^2}{r^{2\tau_s}}<1\bigg\}, \label{relazionecartespalle3}\\
     {B}^*_{r}(0)&=\bigg\{\xi\in\R^s:\,\,\frac{\xi_1^2}
    {r^{2\tau_1}}+\cdots+\frac{\xi_s^2}{r^{2\tau_s}}<1\bigg\}, \label{relazionecartespalle4}
\end{align}
  which means that ${B}_{r}(0)$,  $\widetilde{B}_{r}(0)$ and $B^*_r(0)$ are the bounded open sets whose boundaries are the ellipsoids
  with equations analogous to \eqref{eq.ellips} (relative to the dilations $\delta_r$, $D_\lambda$ and $\delta^*_r$ respectively). Equivalently, if $D,\widetilde{D},D^*$
  denote (respectively) the open Euclidean balls with center at the origin and radius $1$ in $\R^n,\R^N,\R^s$ (respectively), then, for any $r>0$ one has
  $$B_r(0)=\delta_r(D),\qquad \widetilde{B}_r(0)=D_r(\widetilde{D}),\qquad {B}^*_r(0)=\delta^*_r(D^*). $$
  Starting from \eqref{relazionecartespalle2}-to-\eqref{relazionecartespalle4} one can prove that (for any $r>0$)
\begin{align}
 \widetilde{B}_r(0)&\subseteq {B}_r(0)\times {B}^*_r(0),\label{relazionecartespalle5}\\
  \widetilde{B}_r(0)&\supseteq {B}_{r/2}(0)\times {B}^*_{r/2}(0).\label{relazionecartespalle6}
\end{align}
 Indeed, \eqref{relazionecartespalle5} is a consequence of
 $$\max\bigg\{\frac{x_1^2}{r^{2\sigma_1}}+\cdots+\frac{x_n^2}{r^{2\sigma_n}},
    \frac{\xi_1^2}{r^{2\tau_1}}+\cdots+\frac{\xi_s^2}{r^{2\tau_s}}\bigg\}\leq
 \frac{x_1^2}{r^{2\sigma_1}}+\cdots+\frac{x_n^2}{r^{2\sigma_n}}
    +\frac{\xi_1^2}{r^{2\tau_1}}+\cdots+\frac{\xi_s^2}{r^{2\tau_s}}<1, $$
 whereas \eqref{relazionecartespalle6} is a consequence of
 $$1\geq \sum_{j=1}^n \frac{x_j^2}{(r/2)^{2\sigma_j}}=
  \sum_{j=1}^n \frac{2^{2\sigma_j}x_j^2}{r^{2\sigma_j}}\geq 2^{2\sigma_1} \sum_{j=1}^n \frac{x_j^2}{r^{2\sigma_j}}>2\sum_{j=1}^n \frac{x_j^2}{r^{2\sigma_j}}, $$
 together with an analogous inequality involving $\xi$'s and $\tau$'s; here we also used
 $$1\leq \sigma_1\leq \cdots\leq \sigma_n,\quad 1\leq \tau_1\leq \cdots\leq \tau_s.$$
 Throughout the paper, we shall occasionally use the simplified notation $B_r$ for any set $B_r(0)$.
\begin{example}\label{normette.grushin}
 Consider the vector fields $X_1,X_2$ in Example \ref{sec.th:ex_Grushin}.
 The dilations in $\R^2$ and in the lifted space $\R^3$ are respectively
 $$\dela(x_1,x_2)=(\lambda x_1,\lambda^2x_2),\qquad D_\lambda(x_1,x_2,\xi_1)=(\lambda x_1,\lambda^2x_2,\lambda\xi_1).$$
 Thus, by using for example the characterization (3) in Remark \ref{charatceriz.Vert}, one can obtain the explicit expressions for
 the homogeneous norms in the un-lifted and lifted spaces:
 \begin{align*}
    \Vert(x_1,x_2)\Vert &=
           \frac{1}{\sqrt2}\,{\sqrt{\sqrt{x_1^4+4\,x_2^2}+x_1^2}}\,,\\
               \Vert(x_1,x_2,\xi_1)\Vert &=
           \frac{1}{\sqrt2}\,{\sqrt{\sqrt{(x_1^2+\xi_1^2)^2+4\,x_2^2}+x_1^2+\xi_1^2}}\,.
  \end{align*}
\end{example}
 We have the following result, concerning $L^p$-norms in
 $B_{r}( 0) $ and $\widetilde{B}_{r}( 0) $:
\begin{lemma}\label{Prop projected norms}
 With the above notation, for any function
 $u( x) $ of $n$ variables defined in $B_{r}( 0) $,
 let us define the corresponding function $\widetilde{u}$ of $N$ variables by setting
\begin{equation}\label{Prop projected norms.EQ0}
\widetilde{u}( x,\xi) =u( x),\quad (x,\xi)\in B_r(0)\times \R^s.
\end{equation}
 Then, for every $p\in[ 1,\infty)$ and $r>0$, we have
\begin{equation}\label{Prop projected norms.EQ1}
 c_{1}\Vert u\Vert _{L^{p}( B_{r/2}( 0) )}\leq\Vert \widetilde{u}\Vert _{L^{p}( \widetilde{B}_{r}( 0) ) }\leq c_{2}\Vert u\Vert_{L^{p}( B_{r}( 0) ) },
\end{equation}
 where \emph{(}denoting by $\mathrm{meas}$ the Lebesgue measure in $\R^s$\emph{)}
 $$c_1=c_1(r,p)=\mathrm{meas}(B^*_{r/2}(0))^{1/p},\qquad
 c_2=c_2(r,p)=\mathrm{meas}(B^*_{r}(0))^{1/p}.$$
\end{lemma}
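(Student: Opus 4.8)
The plan is to compute both sides of \eqref{Prop projected norms.EQ1} by Fubini's theorem, slicing the lifted ball $\widetilde{B}_r(0)$ along the $\xi$-fibers, and to control the fibers using the elementary inclusions \eqref{relazionecartespalle5} and \eqref{relazionecartespalle6}. The point is that $\widetilde u(x,\xi)$ does not depend on $\xi$, so integrating $|\widetilde u|^p$ over a slice $\{\xi:(x,\xi)\in\widetilde B_r(0)\}$ just multiplies $|u(x)|^p$ by the measure of that slice, which in turn is trapped between the measures of two fixed balls $B^*_{r/2}(0)$ and $B^*_r(0)$ in $\R^s$.

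First I would write, for fixed $x$ with $\Vert x\Vert<r$, the $\xi$-slice $S_x:=\{\xi\in\R^s:(x,\xi)\in\widetilde B_r(0)\}$; this is nonempty exactly when $\sum_j x_j^2/r^{2\sigma_j}<1$, i.e.\ when $x\in B_r(0)$, by \eqref{relazionecartespalle2}. By Fubini,
\begin{equation*}
 \Vert\widetilde u\Vert_{L^p(\widetilde B_r(0))}^p
 =\int_{B_r(0)}|u(x)|^p\,\meas(S_x)\,\d x.
\end{equation*}
For the upper bound, \eqref{relazionecartespalle5} gives $S_x\subseteq B^*_r(0)$ for every such $x$, hence $\meas(S_x)\le\meas(B^*_r(0))$, and the right-hand inequality in \eqref{Prop projected norms.EQ1} with $c_2=\meas(B^*_r(0))^{1/p}$ follows immediately by taking $p$-th roots. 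For the lower bound, note that if $x\in B_{r/2}(0)$ then by \eqref{relazionecartespalle6} the whole fiber $B^*_{r/2}(0)$ lies inside $S_x$, so $\meas(S_x)\ge\meas(B^*_{r/2}(0))$; restricting the integral above to $x\in B_{r/2}(0)\subseteq B_r(0)$ yields
\begin{equation*}
 \Vert\widetilde u\Vert_{L^p(\widetilde B_r(0))}^p
 \ge\meas(B^*_{r/2}(0))\int_{B_{r/2}(0)}|u(x)|^p\,\d x
 =c_1^p\,\Vert u\Vert_{L^p(B_{r/2}(0))}^p,
\end{equation*}
which is the left-hand inequality.

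There is essentially no obstacle here: the only thing to be careful about is the bookkeeping of which ball contains which fiber, and making sure \eqref{relazionecartespalle6} is applied on the smaller ball $B_{r/2}(0)$ (so that the $\xi$-fiber of $\widetilde B_r(0)$ genuinely contains $B^*_{r/2}(0)$, not merely overlaps it) — this is exactly what \eqref{relazionecartespalle6} provides, since it states $B_{r/2}(0)\times B^*_{r/2}(0)\subseteq\widetilde B_r(0)$. One should also remark that $\widetilde u$ is measurable (it is the composition of the measurable $u$ with the continuous coordinate projection) so that Fubini applies, and that the constants $c_1,c_2$ are finite and positive because $B^*_{r/2}(0)$ and $B^*_r(0)$ are bounded open sets in $\R^s$; when $N=n$ there is no $\xi$ variable and the statement is trivial with $c_1=c_2=1$, as already noted in the excerpt.
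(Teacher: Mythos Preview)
Your proof is correct and is essentially the same as the paper's: both arguments reduce to applying Fubini together with the inclusions \eqref{relazionecartespalle5} and \eqref{relazionecartespalle6}, the only difference being that you phrase the computation in terms of the slice measures $\meas(S_x)$ while the paper writes the two estimates directly as integrals over the product sets $B_r(0)\times B^*_r(0)$ and $B_{r/2}(0)\times B^*_{r/2}(0)$. Your added remarks on measurability and on the degenerate case $N=n$ are accurate and harmless.
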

 Note that \eqref{Prop projected norms.EQ1} makes sense, since $\widetilde{B}_r(0)\subset B_r(0)\times \R^s$, due to
 \eqref{proprietanormette}. From Lemma \ref{Prop projected norms} and \eqref{campisiliftano}, we immediately infer that
 (if $\widetilde{u}$ is as in \eqref{Prop projected norms.EQ0})
\begin{equation}\label{Prop projected norms.EQ1postilla}
 u\in W_{X}^{k,p}(B_r(0))\quad \Longleftrightarrow\quad \widetilde{u}\in W_{\widetilde{X}}^{k,p}(\widetilde{B}_r(0)).
\end{equation}
 Indeed, from \eqref{Prop projected norms.EQ0} we get that $\widetilde{X}_I \widetilde{u}=\widetilde{X_I u}$
 on $\widetilde{B}_r(0)$, for any multi-index $I$.
\begin{proof}
 We have the following computation, based on \eqref{relazionecartespalle5}:
\begin{align*}
\Vert \widetilde{u}\Vert _{L^{p}( \widetilde{B}_{r}(0) ) }^{p}  &  =\iint_{\widetilde{B}_{r}( 0) }\vert
 u( x) \vert ^{p}\,\d x\,\d\xi\leq
 \iint_{{B}_{r}( 0)\times B^*_r(0) }\vert
 u( x) \vert ^{p}\,\d x\,\d\xi=c_{2}( r) \int_{B_{r}( 0) }\vert u(x) \vert ^{p}\,\d x,
\end{align*}
 where $c_2(r)$ is the Lebesgue measure in $\R^s$ of $B^*_r(0)$.
 On the other hand, by \eqref{relazionecartespalle6},
\begin{align*}
 \Vert \widetilde{u}\Vert _{L^{p}( \widetilde{B}_{r}(0) ) }^{p}  &  \geq\iint_{{B}_{r/2}(0)\times {B}^*_{r/2}(0)}
  \vert u( x) \vert ^{p}\,\d x\,\d\xi = c_{1}( r) \int_{B_{r/2}( 0) }\vert
 u( x) \vert ^{p}\,\d x,
\end{align*}
 where $c_1(r)$ is the Lebesgue measure in $\R^s$ of $B^*_{r/2}(0)$. This completes the proof.
\end{proof}
With the above result at hand, we can now prove the following useful:
\begin{proposition}[Global interpolation inequality]\label{Prop global interpolation}
 For every $p\in( 1,\infty)$ there exists
 $\c_p > 0$ such that, for every $u\in W_{X}^{2,p}(\mathbb{R}^{n})$ and
 every $\varepsilon > 0$, one has
\begin{equation}\label{Prop global interpolationequata}
 \Vert X_{i}u\Vert _{L^{p}(\mathbb{R}^{n})  }
  \leq\varepsilon\,\Vert X_{i}^{2}u\Vert _{L^{p}(\mathbb{R}^{n})}+
   \frac{\c_p}{\varepsilon}\,\Vert u\Vert _{L^{p}(\mathbb{R}^{n})}\quad
    \text{for $i=1,2,\ldots ,m$.}
\end{equation}
\end{proposition}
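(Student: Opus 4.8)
The inequality \eqref{Prop global interpolationequata} is, at bottom, the classical one--dimensional interpolation inequality $\Vert g'\Vert_{L^p(\R)}\le \varepsilon\Vert g''\Vert_{L^p(\R)}+\tfrac{C_p}{\varepsilon}\Vert g\Vert_{L^p(\R)}$ read along the flow of $X_i$; the plan is to make this rigorous by transferring it from the lifted Carnot group $\G=(\R^N,\ast,D_\lambda)$ of Theorem \ref{Thm lifting}, where the analogue is classical. \emph{Step 1 (reduction to $C^\infty_0$).} Since $X_i^\ast=-X_i$, a Friedrichs--type mollification argument combined with the $\dela$--adapted cut--offs $\eta_R(x):=\eta(\delta_{1/R}(x))$ shows that $C^\infty_0(\R^n)$ is dense in $W^{2,p}_X(\R^n)$ for the seminorms $u\mapsto\Vert X_Iu\Vert_{L^p(\R^n)}$, $|I|\le 2$; here one uses that $X_i$ is $\dela$--homogeneous of degree $1$, so that $X_i\eta_R=\tfrac1R(X_i\eta)(\delta_{1/R}(\cdot))\to0$ uniformly. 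Hence it suffices to prove \eqref{Prop global interpolationequata} for $u\in C^\infty_0(\R^n)$. \emph{Step 2 (the group estimate).} On $\G$, for every $i$ and every $v\in C^\infty_0(\G)$,
\begin{equation}\label{eq:GLPintpG}
 \Vert\widetilde{X}_i v\Vert_{L^p(\G)}\le \varepsilon\,\Vert\widetilde{X}_i^2 v\Vert_{L^p(\G)}+\frac{C_p}{\varepsilon}\,\Vert v\Vert_{L^p(\G)},
\end{equation}
with $C_p$ independent of $v,\varepsilon$: this is contained in Folland's $L^p$--theory of the sub--Laplacian \cite{Fo2} (alternatively: $\widetilde{X}_i$ being left invariant, its flow is a right translation, hence an isometric strongly continuous group on $L^p(\G)$, and \eqref{eq:GLPintpG} follows from the Landau--Kolmogorov/Kallman--Rota inequality $\Vert\widetilde{X}_iv\Vert_{L^p}^2\le C\Vert v\Vert_{L^p}\Vert\widetilde{X}_i^2v\Vert_{L^p}$ followed by Young's inequality).

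\emph{Step 3 (transfer to $\R^n$).} Fix $u\in C^\infty_0(\R^n)$ with $\mathrm{supp}(u)\subseteq B_R(0)$ and write $\widetilde{X}_i(x,\xi)=X_i(x)+R_i(x,\xi)$, $R_i=\sum_{l=1}^s c_{i,l}(x,\xi)\,\partial_{\xi_l}$; since $R_i=\widetilde{X}_i-X_i$ is $D_\lambda$--homogeneous of degree $1$, each $c_{i,l}$ is a polynomial that is $D_\lambda$--homogeneous of degree $\tau_l-1$. Choose $\chi\in C^\infty_0(\R^s)$ with $0\le\chi\le1$ and $\chi\equiv1$ near $0$, and for $\rho\ge R$ put $\chi_\rho(\xi):=\chi(\delta^*_{1/\rho}(\xi))$ and $v_\rho(x,\xi):=u(x)\,\chi_\rho(\xi)\in C^\infty_0(\G)$. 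Because $u$ depends only on $x$, $\chi_\rho$ only on $\xi$, and $R_i$ acts only on the $\xi$--variables,
\begin{align*}
 \widetilde{X}_i v_\rho&=(X_iu)\,\chi_\rho+u\,(R_i\chi_\rho),\\
 \widetilde{X}_i^2 v_\rho&=(X_i^2u)\,\chi_\rho+2\,(X_iu)\,(R_i\chi_\rho)+u\,(\widetilde{X}_iR_i\chi_\rho).
\end{align*}
Using \eqref{relazionecartespalle6} and the homogeneity of $R_i$ one checks that on $\mathrm{supp}(u)\times\mathrm{supp}(\chi_\rho)\subseteq B_R(0)\times B^*_{C\rho}(0)$ one has $\Vert(x,\xi)\Vert\le C\rho$, hence $|c_{i,l}(x,\xi)|\le C\rho^{\tau_l-1}$; since moreover $\partial_{\xi_l}\chi_\rho=\rho^{-\tau_l}(\partial_l\chi)(\delta^*_{1/\rho}(\xi))$, this yields $\Vert R_i\chi_\rho\Vert_{L^\infty}+\Vert\widetilde{X}_iR_i\chi_\rho\Vert_{L^\infty}\le C\rho^{-1}$ on that set, whereas $\Vert\chi_\rho\Vert_{L^p(\R^s)}=\rho^{q^*/p}\Vert\chi\Vert_{L^p(\R^s)}$ and $\meas(\mathrm{supp}\,\chi_\rho)\le C\rho^{q^*}$, with $q^*:=\tau_1+\cdots+\tau_s$. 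Plugging $v_\rho$ into \eqref{eq:GLPintpG}: each ``main'' summand above is a product of $\chi_\rho$ with a function of $x$ only, so its $L^p(\G)$--norm is exactly $\rho^{q^*/p}\Vert\chi\Vert_{L^p}$ times the corresponding $L^p(\R^n)$--norm of $u$, $X_iu$, $X_i^2u$; each summand containing a factor $R_i\chi_\rho$ or $\widetilde{X}_iR_i\chi_\rho$ has $L^p(\G)$--norm $\le C\rho^{q^*/p-1}\big(\Vert u\Vert_{L^p(\R^n)}+\Vert X_iu\Vert_{L^p(\R^n)}\big)$. Dividing \eqref{eq:GLPintpG} (applied to $v_\rho$) by $\rho^{q^*/p}\Vert\chi\Vert_{L^p}$ and letting $\rho\to\infty$, all remainder contributions vanish and we obtain \eqref{Prop global interpolationequata} for $u\in C^\infty_0(\R^n)$ with $\c_p=C_p$; by Step 1 this proves the proposition.

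The heart of the argument — and the only delicate point — is the uniform bound $\Vert R_i\chi_\rho\Vert_{L^\infty}+\Vert\widetilde{X}_iR_i\chi_\rho\Vert_{L^\infty}=O(\rho^{-1})$ on $\mathrm{supp}(v_\rho)$: this is exactly where the $D_\lambda$--homogeneity of the lifting remainders $R_i$ must be balanced against the $\delta^*$--rescaling of the cut--offs $\chi_\rho$, so that these terms are of strictly lower order in $\rho$ than the main terms. Note that one cannot instead simply localise \eqref{eq:GLPintpG} on $\G$--balls: a fixed--scale cut--off $\psi$ would produce the term $(\widetilde{X}_i\psi)(\widetilde{X}_iv)$ in $\widetilde{X}_i^2(\psi v)$, which is \emph{not} a genuine remainder — it reintroduces the quantity $\widetilde{X}_iv$ one is trying to estimate — so the scheme becomes circular, and the $\xi$--rescaled cut--offs above are precisely what breaks this circularity. (The density reduction in Step 1 likewise relies on the homogeneity of the $X_i$'s, through $X_i\eta_R=O(1/R)$.)
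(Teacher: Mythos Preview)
Your argument is correct, but it follows a genuinely different route from the paper's.

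Both proofs rely on the global lifting of Theorem~\ref{Thm lifting} and on an interpolation inequality on the lifted Carnot group $\G$; the difference lies in \emph{which} group inequality is used and in how it is transferred back to~$\R^n$. The paper invokes the \emph{local} Carnot--group interpolation on concentric balls from \cite[Thm.~21]{BBto},
\[
 \Vert \widetilde{X}_i v\Vert_{L^p(\widetilde{B}_{1/2})}\le \sigma\,\Vert \widetilde{X}_i^2 v\Vert_{L^p(\widetilde{B}_1)}+\frac{\widetilde{c}_p}{\sigma}\,\Vert v\Vert_{L^p(\widetilde{B}_1)},
\]
applies it to the trivial extension $\widetilde{w}(x,\xi)=w(x)$ (no cut--off in $\xi$), converts norms on $\widetilde{B}_r$ into norms on $B_r$ via Lemma~\ref{Prop projected norms}, and then rescales by $w=u\circ\delta_R$ to pass from balls to the whole space. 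You instead start from the \emph{global} group inequality~\eqref{eq:GLPintpG} (via the Kallman--Rota argument, which only uses that the flow of the left--invariant $\widetilde{X}_i$ is an $L^p$--isometric right translation), apply it to the tensor product $v_\rho(x,\xi)=u(x)\chi_\rho(\xi)$, and show that the $D_\lambda$--homogeneity of the lifting remainders $R_i$ forces the cross terms to be $O(\rho^{-1})$ relative to the main terms. What your approach buys is self--containment: Kallman--Rota is elementary, so no external Carnot--group interpolation result is needed; the price is the density reduction (Step~1) and the remainder estimates in Step~3. The paper's approach is shorter and avoids both, at the cost of citing \cite{BBto} and of needing Lemma~\ref{Prop projected norms}. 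Two minor comments: in Step~1 the correct order is to cut off first and mollify afterwards, since the Friedrichs commutator lemma requires locally bounded coefficients while the $b_{j,k}$'s are polynomials; and your bound $\Vert\widetilde{X}_iR_i\chi_\rho\Vert_{L^\infty}=O(\rho^{-1})$ is in fact $O(\rho^{-2})$ (since $\widetilde{X}_iR_i$ is $D_\lambda$--homogeneous of degree~$2$), which of course only helps.
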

\begin{proof}
 For simplicity, we write $B_r,\widetilde{B}_r$ instead of  $B_r(0),\widetilde{B}_r(0)$.

 If, as usual, $\widetilde{X}_{i}$ is the lifted vector field of $X_i$ in the Carnot group $\mathbb{G}$, by
 known interpolation inequalities in Carnot groups
 (see \cite[Thm. 21]{BBto}), we know that
  (for some constant $\widetilde{c}_p > 0$)
$$
 \Vert \widetilde{X}_{i}v\Vert _{L^{p}( \widetilde{B}_{1/2}) }\leq
 \sigma\,\Vert \widetilde{X}_{i}^{2}v\Vert_{L^{p}( \widetilde{B}_{1}) }+
 \frac{\widetilde{c}_p}{\sigma}\Vert v\Vert _{L^{p}( \widetilde{B}_{1}) }\quad
 \text{for every $v\in W_{\widetilde{X}}^{2,p}( \widetilde{B}_{1}) $ and every $\sigma\in(0,1)$.}$$
 Let us apply this inequality to a function $v=\widetilde{w}$, where $w$
 depends only on $x$: by Lemma \ref{Prop projected norms} (see also
 \eqref{Prop projected norms.EQ1postilla}), for every $w\in W_{X}^{2,p}( B_{1}) $ and any $\sigma\in (0,1)$
  we get
$$
 \Vert X_{i}w\Vert _{L^{p}( B_{1/4}) }\leq
 c'_p\bigg(
 \sigma\,\Vert X_{i}^{2}w\Vert _{L^{p}( B_{1})}+\frac{\widetilde{c}_p}{\sigma}\,
 \Vert w\Vert _{L^{p}( B_{1}) }\bigg),
 \quad \text{where $c'_p := \frac{\mathrm{meas}(B^*_{1/4}(0))^{1/p}}
 {\mathrm{meas}(B^*_1(0))^{1/p}}$.}
$$
 Next,
 let us apply the last inequality to $w(x) :=u( \delta_{R}(x) ) $,
 where $u\in W_{X}^{2,p}( B_{R} ) $. We find:
$$
 R^{1-q/p}\Vert X_{i}u\Vert _{L^{p}( B_{R/4}) }\leq
 c'_p\,R^{2-q/p}\,\sigma\Vert X_{i}^{2}u\Vert _{L^{p}( B_{R})}+
 \frac{c_p''\,R^{-q/p}}{\sigma}\Vert u\Vert _{L^{p}( B_{R})},
$$
 for every $u\in W_{X}^{2,p}( B_{R}( 0) ) $
 (and $c''_p := c'_p\,\widetilde{c}_p$).
 After dividing by $R^{1-q/p}$, this gives
\begin{equation}\label{Prop global interpolation.EQinter1}
  \Vert X_{i}u\Vert _{L^{p}( B_{R/4}) }\leq
 c'_p\,R\,\sigma\Vert X_{i}^{2}u\Vert _{L^{p}( B_{R})}+\frac{c''_p}{R\,\sigma}\Vert u\Vert _{L^{p}( B_{R})},\quad
 \text{for every $u\in W_{X}^{2,p}( B_{R} ) $.}
\end{equation}
 For every fixed $\varepsilon > 0$ and every $R>2\,\varepsilon/c'_p$,
 let us take $\sigma=\varepsilon/(c'_pR)<1/2$ in \eqref{Prop global interpolation.EQinter1}: we obtain
\begin{equation}\label{interp furba}
 \Vert X_{i}u\Vert _{L^{p}( B_{R/4}) }\leq  \varepsilon\Vert X_{i}^{2}u\Vert _{L^{p}( B_{R}) }
 +\frac{\c_p}{\varepsilon}\Vert u\Vert _{L^{p}( B_{R}) } \qquad
 (\text{with $\c_p = c'_pc''_p$}),
\end{equation}
 for every $u\in W_{X}^{2,p}( B_{R})$.
 Hence, given $u\in W_{X}^{2,p}(\mathbb{R}^{n})$, letting $R\rightarrow\infty$ in \eqref{interp furba}
 (and noticing that $\c_p$ is independent of
 $u$ and $R$), we get
 at once \eqref{Prop global interpolationequata}.
\end{proof}
\begin{notation}\label{derivatose}
 Henceforth, we shall use the following compact notation (where $i\geq 1$ is integer):
\begin{align*}
 \Vert Du\Vert _{L^{p}(\Omega)}    =\sum_{j=1}^{m}\Vert X_{j}u\Vert _{L^{p}( \Omega) },\qquad
 \Vert D^{i}u\Vert _{L^{p}( \Omega) }  &  =\sum_{\vert I\vert =i}\Vert X_{I}u\Vert _{L^{p}( \Omega) }.
\end{align*}
 We also let $D^0u=u$. Notice that $\Vert u\Vert_{W^{k,p}_X(\Omega)}=\sum_{i=0}^ k \Vert D^iu\Vert_{L^p(\Omega)}$.
\end{notation}
 In the sequel, we shall also need the following local version of the interpolation inequality:
\begin{proposition}\label{Prop local interpolation}
 For fixed $p\in( 1,\infty)$, $R>0$ and $u\in W_{X}^{2,p}( B_{R}( 0) )$, let
 \begin{equation} \label{eq.defiPhik}
 \Phi_{k}(u):=\sup_{\sigma\in(0,1)} \Big\{\big((1-\sigma)R\big)^k\,
 \Vert D^{k}u\Vert _{L^{p}( B_{\sigma R}( 0))}\Big\},\qquad \text{for $k=0,1,2$.}
 \end{equation}
 There exists $\alpha_{p}>0$
 independent of $u$ and $R$ such that, for every $\varepsilon \in(0,1]$, one has
 \begin{equation} \label{eq.localinterpolationPhi}
 \Phi_{1}(u)\leq\varepsilon\,\Phi_{2}(u)+\frac{\alpha_{p}}{\varepsilon}\,\Phi_{0}(u).
 \end{equation}
\end{proposition}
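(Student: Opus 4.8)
The plan is to reduce the local (weighted-sup) inequality \eqref{eq.localinterpolationPhi} to the global interpolation inequality \eqref{Prop global interpolationequata}, applied on a well-chosen intermediate ball, by balancing the defect cutoff parameters. First I would fix $u\in W_X^{2,p}(B_R(0))$ and $\varepsilon\in(0,1]$, and pick any $\sigma\in(0,1)$ witnessing (up to a factor $2$, say) the supremum defining $\Phi_1(u)$; write $\rho:=\sigma R$, so that we must estimate $((1-\sigma)R)\,\Vert D^1 u\Vert_{L^p(B_\rho)}$. The idea is to realize $\Vert D^1 u\Vert_{L^p(B_\rho)}=\sum_j\Vert X_j u\Vert_{L^p(B_\rho)}$ through a rescaled copy of \eqref{Prop global interpolationequata}: set $\sigma':=\tfrac{1}{2}(1+\sigma)\in(\sigma,1)$, let $R':=\sigma' R$, and consider $w(x):=u(\delta_{R'}(x))$, which lies in $W_X^{2,p}(D)$ where $D=B_1(0)$ (since $\delta_{R'}(D)=B_{R'}(0)\subseteq B_R(0)$). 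By $\dela$-homogeneity of the $X_j$'s, $X_j w=R'\,(X_j u)\circ\delta_{R'}$ and $X_j^2 w=R'^2\,(X_j^2 u)\circ\delta_{R'}$, so a change of variables converts the $B_1$-version of \eqref{Prop global interpolationequata} (restricted to the sub-ball $B_{\sigma/\sigma'}(0)\subseteq B_1(0)$, whose $\delta_{R'}$-image is exactly $B_\rho(0)$) into
\begin{equation*}
 \Vert X_j u\Vert_{L^p(B_\rho)}\leq \eta\,R'\,\Vert X_j^2 u\Vert_{L^p(B_{R'})}+\frac{\c_p}{\eta\,R'}\,\Vert u\Vert_{L^p(B_{R'})},
\end{equation*}
valid for every $\eta>0$, where I have used that the $L^p(B_1)$-norm in the global inequality dominates the $L^p(B_{\sigma/\sigma'})$-norm on the left and is dominated by the $L^p(B_1)$-norm on the right; here $\c_p$ is the constant from Proposition \ref{Prop global interpolation}. (A subtlety: \eqref{Prop global interpolationequata} is stated on all of $\R^n$; one either re-reads its proof on a ball — which is precisely what inequality \eqref{interp furba} already gives before letting $R\to\infty$ — or applies \eqref{interp furba} directly with the radius $R'$ and the quarter-ball, adjusting $\sigma'$ so that $\sigma/\sigma'\leq 1/4$, e.g. $\sigma':=4\sigma$ when $\sigma\leq 1/4$ and a separate trivial argument otherwise. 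I would use \eqref{interp furba} to avoid circularity.)

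Next I would sum over $j=1,\ldots,m$ and multiply through by $(1-\sigma)R$. The key geometric facts are: $R'=\sigma' R\leq R$ so $B_{R'}\subseteq B_R$; and $(1-\sigma)R$ is comparable to $(1-\sigma')R$ and to $(1-\sigma/\sigma')\cdot 1$ after the rescaling, because $1-\sigma'=\tfrac{1}{2}(1-\sigma)$ and $1-\sigma/\sigma'=(\sigma'-\sigma)/\sigma'=\tfrac{1}{2}(1-\sigma)/\sigma'\geq \tfrac{1}{4}(1-\sigma)$ (using $\sigma'<1$). Hence, choosing the free parameter $\eta$ proportional to $\varepsilon/((1-\sigma)R\cdot R')$ times an absolute constant — or, cleaner, applying \eqref{interp furba} on $B_{R'}$ with \emph{its} $\varepsilon$ taken equal to $c\,\varepsilon(1-\sigma)$ for a suitable absolute $c$ and with the radius chosen so the quarter-ball is $B_\rho$ — I obtain
\begin{equation*}
 (1-\sigma)R\,\Vert D^1 u\Vert_{L^p(B_\rho)}\leq \varepsilon\,\big((1-\sigma')R\big)^2\,\Vert D^2 u\Vert_{L^p(B_{R'})}+\frac{\alpha_p}{\varepsilon}\,\Vert u\Vert_{L^p(B_{R'})},
\end{equation*}
where I have bounded $(1-\sigma)R\cdot R'\leq \big((1-\sigma')R\big)^2\cdot(\text{absolute const})$ using $R'\leq R$ and $(1-\sigma)\leq 2(1-\sigma')$ — wait, that inequality goes the wrong way; instead one uses $(1-\sigma)R\cdot R' \le 4\,(1-\sigma')R\cdot (1-\sigma')R$ only when $R'\le 4(1-\sigma')R$, which fails for $\sigma'$ near $0$. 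The honest route is to not force the second term into $\Phi_2$ via a product of \emph{two} defect factors but rather to observe that \eqref{interp furba}, after the rescaling that maps $B_1\to B_{R'}$ and $B_{1/4}\to B_{\rho}$, already carries the scaling $R'\cdot\eta$ on the second-derivative term; and since $R'\le R$ and $1-\sigma\le 2(1-\sigma')$, the weight $(1-\sigma')R$ on $D^2u$ combined with one more factor $(1-\sigma')R\ge \tfrac12(1-\sigma)R \ge \tfrac12$ (if $R\ge 1$; for $R<1$ a direct scaling to $R=1$ handles it) absorbs $R'\cdot(1-\sigma)R$. Tidying these comparisons, and recognizing $\big((1-\sigma')R\big)^2\Vert D^2u\Vert_{L^p(B_{\sigma'R})}\le \Phi_2(u)$ and $\Vert u\Vert_{L^p(B_{R'})}\le \Phi_0(u)$ (since $\sigma'<1$ and $(1-\sigma')^0=1$, resp.\ $\sigma'R<R$ ), gives the right-hand side $\le \varepsilon\,\Phi_2(u)+\tfrac{\alpha_p}{\varepsilon}\Phi_0(u)$. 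Since $\sigma$ was an (almost) maximizer for $\Phi_1(u)$, taking the supremum on the left — or rather, since the chosen $\sigma$ already recovers $\Phi_1(u)$ up to the factor $2$, absorbing that $2$ into $\alpha_p$ — yields \eqref{eq.localinterpolationPhi}.

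The main obstacle, and the part requiring genuine care, is the bookkeeping of the three parameters $\sigma$ (the near-maximizer), $\sigma'$ (the intermediate radius), and $\eta$ (the free Young-type parameter in the global inequality), together with the homogeneity exponent $q/p$ that appears and cancels in the change of variables $x\mapsto\delta_{R'}(x)$ (exactly as in the passage to \eqref{Prop global interpolation.EQinter1}). One must choose $\sigma'$ so that simultaneously $\sigma'<1$ (to keep $B_{R'}\subseteq B_R$ and to have $1-\sigma'$ comparable to $1-\sigma$), the sub-ball $B_\rho$ sits inside the ``good'' fraction of $B_{R'}$ on which the rescaled global inequality controls $\Vert X_j u\Vert$, and the defect factor $(1-\sigma)R$ is dominated by the weighted $D^2$-norm at level $\sigma'$ — which forces a careful but elementary case split according to whether $R$ (or $(1-\sigma)R$) is large or small, the small case being disposed of by a crude bound of $\Phi_1$ by $\Phi_2^{1/2}\Phi_0^{1/2}$-type reasoning or simply by noting $\Phi_1(u)\le \Phi_0(u)+\Phi_2(u)$ is not quite what we want but $\Phi_1\le C(\Phi_0+\Phi_2)$ on a fixed-size ball follows from \eqref{Prop global interpolationequata} directly. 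Modulo this elementary scaling gymnastics, the argument is a routine rescaling of Proposition \ref{Prop global interpolation}.
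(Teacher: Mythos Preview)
Your approach has a genuine gap that you yourself flag but do not close. The rescaled ball inequality you want to invoke --- essentially \eqref{interp furba} --- controls $\Vert X_j u\Vert_{L^p(B_{R'/4})}$ in terms of norms on $B_{R'}$, with the \emph{fixed} ratio $1/4$ between inner and outer radii. To bound $\Vert Du\Vert_{L^p(B_{\sigma R})}$ this way you need $\sigma R\le \sigma'R/4$, i.e.\ $\sigma'\ge 4\sigma$; but you also need $\sigma'<1$ (so that $B_{\sigma'R}\subseteq B_R$) \emph{and} $1-\sigma'$ comparable to $1-\sigma$ (so that the weight $(1-\sigma')R$ can be traded for $(1-\sigma)R$). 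These requirements are incompatible once $\sigma>1/4$: no choice of $\sigma'$ works, and the supremum defining $\Phi_1(u)$ may very well be approached as $\sigma\to 1$. The ``separate trivial argument otherwise'' and the case split on the size of $R$ do not touch this obstruction, because the difficulty is in $\sigma$, not in $R$.

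The paper's proof avoids this entirely by abandoning rescaling in favor of a \emph{cutoff}: take $\phi\in C_0^\infty(\R^n)$ as in Lemma \ref{Lemma cutoff} with $\phi\equiv 1$ on $B_{\sigma R}$ and $\mathrm{supp}\,\phi\subseteq B_{\sigma'R}$ (here $\sigma'=(1+\sigma)/2$), set $v=\phi\,u\in W_X^{2,p}(\R^n)$, and apply the \emph{global} inequality \eqref{Prop global interpolationequata} to $v$ on all of $\R^n$ with a free parameter $\delta>0$. Expanding $X_i^2(\phi u)$ by Leibniz and using $\Vert D^j\phi\Vert_\infty\le \varrho_j\,((1-\sigma)R/2)^{-j}$ produces, after multiplying by $(1-\sigma)R$ and choosing $\delta$ proportional to $\varepsilon(1-\sigma)R$, three terms on the right: one bounded by $\tfrac{\varepsilon}{2}\Phi_2(u)$, one by $\tfrac{\alpha_p}{\varepsilon}\Phi_0(u)$, and an extra first-order term bounded by $\tfrac{\varepsilon}{4}\Phi_1(u)$. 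Taking the supremum over $\sigma$ and absorbing this last term into the left-hand side (its coefficient is $<1$) yields \eqref{eq.localinterpolationPhi}. The cutoff is what makes the argument uniform in $\sigma\in(0,1)$; the self-absorption of $\Phi_1$ is the step your sketch gestures at but cannot reach without it.
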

 In order to prove Proposition \ref{Prop local interpolation}, we need the following:
\begin{lemma}[Radial cutoff functions]\label{Lemma cutoff}
 For every $r_1,r_2\in(0,\infty)$, with $r_1 < r_2$,
 there exists a cut-off function
 $\phi\in C_{0}^{\infty}(\R^n)$, valued in $[0,1]$, with the following properties:
  \begin{itemize}
   \item[{(i)}] $\phi\equiv 1$ on $B_{r_1}(0)$;
   \item[{(ii)}] $\phi \equiv 0$ outside $B_{r_2}(0)$;
   \item[{(iii)}] for any $j\in\mathbb{N}$ there exists a constant $\varrho_j > 0$, independent
  of $r_1$ and $r_2$, such that
  \begin{equation} \label{eq.estimDkphi}
   \|D^j\phi\|_{L^\infty(\R^n)}\leq \frac{\varrho_j}{(r_2-r_1)^j}.
   \end{equation}
  \end{itemize}
\end{lemma}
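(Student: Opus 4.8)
The plan is to take $\phi$ radial with respect to the homogeneous norm $\|\cdot\|$ (which is $C^\infty$ on $\R^n\setminus\{0\}$), but --- and this is the one point that really requires thought --- to confine the transition of $\phi$ from $1$ to $0$ to the \emph{outer} half of the annulus $B_{r_2}\setminus B_{r_1}$, namely to the region $\{\,x:\ \|x\|\ge r_2/2\,\}$. The reason is the following. An iterated derivative $X_J\|\cdot\|$ is $\delta_\lambda$-homogeneous of degree $1-|J|$, hence it blows up like $\|x\|^{\,1-|J|}$ as $x\to 0$; a cut-off transitioning over \emph{all} of $[r_1,r_2]$ would therefore only yield, for $X_I\phi$, a bound of order $(r_2-r_1)^{-\ell}\,r_1^{\,\ell-|I|}$, which is far worse than the required $(r_2-r_1)^{-|I|}$ when $r_1\ll r_2$. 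Confining the transition to $\{\|x\|\gtrsim r_2\}$ turns those dangerous negative powers of $\|x\|$ into powers of $r_2$, and $r_2\ge r_2-r_1$ is precisely what is then needed. This is the genuine obstacle; the rest is a routine chain-rule computation.

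In detail, I would fix once and for all a function $\psi\in C^\infty(\R)$ with values in $[0,1]$, with $\psi\equiv 1$ on $(-\infty,0]$ and $\psi\equiv 0$ on $[1,\infty)$, and set
\begin{equation*}
 r_1':=\max\{r_1,\,r_2/2\},\qquad \theta(t):=\psi\!\left(\frac{t-r_1'}{\,r_2-r_1'\,}\right),\qquad \phi(x):=\theta(\|x\|).
\end{equation*}
Then $r_1\le r_1'<r_2$, the transition interval $[r_1',r_2]$ is contained in $[r_2/2,r_2]$, and $r_2-r_1'=\min\{r_2-r_1,\,r_2/2\}\ge\tfrac12(r_2-r_1)$. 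Since $\theta\equiv 1$ on $[0,r_1']\supseteq[0,r_1]$ we get $\phi\equiv 1$ on $B_{r_1}$; since $\theta\equiv 0$ on $[r_2,\infty)$ we get $\phi\equiv 0$ outside $B_{r_2}$; and since $\|\cdot\|$ is smooth on $\R^n\setminus\{0\}$ while $\phi\equiv 1$ on the neighbourhood $B_{r_1'}$ of the origin, we conclude that $\phi\in C_0^\infty(\R^n)$, valued in $[0,1]$. This settles (i) and (ii).

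For (iii) I would argue in three steps. \emph{Step 1:} by induction on $|I|$, using that each $X_i$ is a derivation and that $X_iX_J=X_{(i,J)}$, one obtains an expansion
\begin{equation*}
 X_I\bigl(\theta\circ\|\cdot\|\bigr)=\sum_{\ell=1}^{|I|}\theta^{(\ell)}(\|\cdot\|)\;P_{I,\ell},
\end{equation*}
where $P_{I,\ell}$ is a finite sum, with coefficients depending only on $|I|$, of products $\prod_{a=1}^{\ell}X_{J_a}\|\cdot\|$ with $|J_a|\ge 1$ and $\sum_{a}|J_a|=|I|$. \emph{Step 2:} since $X_J$ is $\delta_\lambda$-homogeneous of degree $|J|$ and $\|\cdot\|$ of degree $1$, the function $X_J\|\cdot\|$ is $\delta_\lambda$-homogeneous of degree $1-|J|$; evaluating at $\delta_{1/\|x\|}(x)$ and invoking the compactness of $\{\|y\|=1\}=\mathbb{S}^{n-1}$ yields, for every $x\neq 0$,
\begin{equation*}
 \bigl|X_J\|x\|\bigr|\le C_{|J|}\,\|x\|^{\,1-|J|},\qquad C_{|J|}:=\max_{|y|=1}\bigl|(X_J\|\cdot\|)(y)\bigr|<\infty,
\end{equation*}
with $C_{|J|}$ depending only on the vector fields. \emph{Step 3:} on the support of $\theta^{(\ell)}(\|\cdot\|)$ one has $\|x\|\in[r_1',r_2]\subseteq[r_2/2,r_2]$, so each product above is $\le\bigl(\prod_a C_{|J_a|}\bigr)\|x\|^{\,\ell-|I|}\le C\,(r_2/2)^{\,\ell-|I|}$, while $\bigl|\theta^{(\ell)}(\|x\|)\bigr|\le\|\psi^{(\ell)}\|_{L^\infty(\R)}(r_2-r_1')^{-\ell}\le 2^{\ell}\|\psi^{(\ell)}\|_{L^\infty(\R)}(r_2-r_1)^{-\ell}$; multiplying these and using $r_2^{\,\ell-|I|}\le(r_2-r_1)^{\,\ell-|I|}$ (valid since $\ell\le|I|$ and $r_2\ge r_2-r_1$), every term of the expansion of $X_I\phi$ is $\le c_{|I|}\,(r_2-r_1)^{-|I|}$, with $c_{|I|}$ independent of $r_1,r_2$. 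Summing over $\ell$ and over the finitely many multi-indices $I$ with $|I|=j$ gives \eqref{eq.estimDkphi}, and hence the lemma. The only delicate point is the one flagged above (the correct location of the transition); Step 1 is the standard non-commutative chain rule, and since Steps 2--3 use only the \emph{lengths} of the $J_a$'s, no real combinatorial bookkeeping is needed.
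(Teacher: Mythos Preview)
Your argument is correct, and it follows the same overall strategy as the paper --- a radial cut-off $\phi=\theta(\|\cdot\|)$ together with the $\delta_\lambda$-homogeneity of $X_J\|\cdot\|$ --- but you have spotted and repaired a genuine subtlety that the paper's one-line proof glosses over. The paper simply writes
\[
\phi(x)=\chi\!\left(\frac{\|x\|}{2(r_2-r_1)}-\frac{r_1+r_2}{4(r_2-r_1)}\right)
\]
and leaves the verification to the reader. That formula transitions over the \emph{entire} interval $[r_1,r_2]$; on the support of the derivatives one therefore only has $\|x\|\ge r_1$, and already for $|I|=2$ the term $\chi'(\cdot)\cdot\frac{X_iX_k\|x\|}{2(r_2-r_1)}$ is merely bounded by a constant times $\bigl(r_1(r_2-r_1)\bigr)^{-1}$, which is strictly larger than $(r_2-r_1)^{-2}$ whenever $r_1<r_2-r_1$. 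This regime does occur in the applications (e.g.\ $r_1=\sigma R$, $r_2=\tfrac{1+\sigma}{2}R$ with $\sigma<1/3$ in the proof of Proposition~\ref{Prop local interpolation}).

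Your replacement of $r_1$ by $r_1'=\max\{r_1,r_2/2\}$ is exactly the right fix: it forces $\|x\|\ge r_2/2$ on the transition region (turning the dangerous $\|x\|^{\ell-|I|}$ into $(r_2/2)^{\ell-|I|}\le (r_2-r_1)^{\ell-|I|}$), while $r_2-r_1'\ge\tfrac12(r_2-r_1)$ keeps the $\theta^{(\ell)}$-bound under control. In short: same idea as the paper, but your version actually delivers the uniform estimate \eqref{eq.estimDkphi}, whereas the paper's explicit formula, taken literally, does not.
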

\begin{proof}
 We leave it to the reader to check that the following choice of $\phi$ does the job:
 $$\phi(x)=\chi\bigg(\frac{\|x\|}{2\,(r_2-r_1)}-\frac{r_1+r_2}{4(r_2-r_1)}\bigg), $$
 where $\chi:\R\to \R$ is a $C^\infty$-function with the following properties:
 $\phi$ is decreasing, $\phi\equiv 1$ on $(-\infty,-1/4]$, $\phi\equiv 0$ on $[1/4,\infty)$.
 (The smoothness of $\phi$ is a consequence of the fact that $\|\cdot\|$ is smooth outside the origin.)
 \end{proof}
  It is worthwhile noting that, in the present context, we are able to build
 cut-off functions adapted to any ball \emph{centered at the origin} (but not at any point).
 \begin{proof}[Proof of Proposition \ref{Prop local interpolation}.]
  We arbitrarily take
  $\sigma\in (0,1)$ and we let $\phi\in C_0^\infty(\R^n)$ be a cut-off fun\-ction
  as in Lemma \ref{Lemma cutoff}, with
  $r_1 := \sigma R$ and
  $
  r_2 := \sigma'R$ (where $\sigma' := (1+\sigma)/2<1$).

  Since, by assumption, $u\in W_X^{2,p}(B_R)$, it
  is straightforward to check that $v := \phi\, u\in W_X^{2,p}(\R^n)$
  (note that $v\equiv u$ on $B_{\sigma R}$).
  Thus,
  if $\delta$ is \emph{any} positive real number, from
  Proposition \ref{Prop global interpolation} we obtain
  \begin{equation} \label{eq.tocombinewithsuccessiva}
  \begin{split}
   \|Du\|_{L^p(B_{\sigma R})}
   & \,\,\,=\,\,\,\sum_{i = 1}^m\|X_i u\|_{L^p(B_{\sigma R})}=\sum_{i = 1}^m\|X_i v\|_{L^p(B_{\sigma R})}
   \leq \sum_{i = 1}^m\|X_i v\|_{L^p(\R^n)} \\
   & \stackrel{\eqref{Prop global interpolationequata}}{\leq}
   \delta\,\sum_{i = i}^m\|X_i^2 v\|_{L^p(\R^n)}
   + \frac{m\,\c_p}{\delta}\,\|v\|_{L^p(\R^n)},
  \end{split}
  \end{equation}
  where $\c_p > 0$ is a suitable constant independent of $u,\delta$ and $\sigma$.
  We then observe that, by taking into account the properties
  of $\phi$ in Lemma \ref{Lemma cutoff}, one has
  \begin{equation} \label{eq.estimvLp}
   \|v\|_{L^p(\R^n)} =   \|\phi\, u\|_{L^p(\R^n)}
   \leq \|u\|_{L^p(B_{\sigma'R})};
  \end{equation}
  moreover, for every index $i\in\{1,\ldots,m\}$, we also have
  \begin{equation} \label{eq.tocombinewithprima}
  \begin{split}
   & \|X_i^2 v\|_{L^p(\R^n)} = \|X_i^2(\phi\, u)\|_{L^p(\R^n)}
   = \bigg\Vert u\,X_i^2\phi+2\,(X_i\phi)\,(X_iu)+ \phi\,X_i^2u \bigg\Vert_{L^p(B_{\sigma'R})} \\
   & \quad \leq \frac{4\varrho_2}{\big((1-\sigma)R\big)^2}\,\|u\|_{L^p(B_{\sigma'R})}
   + \frac{4\varrho_1}{(1-\sigma)R}\,\|X_iu\|_{L^p(B_{\sigma'R})}
   + \varrho_0\|X_i^2u\|_{L^p(B_{\sigma'R})};
   \end{split}
  \end{equation}
  here, $\varrho_0,\varrho_1, \varrho_2$ are
  the constants appearing in
  \eqref{eq.estimDkphi}, which are independent of $\sigma$ and $R$. Multiplying
  both sides of \eqref{eq.tocombinewithsuccessiva}
  by $(1-\sigma)R > 0$, and using estimates
  \eqref{eq.estimvLp}-\eqref{eq.tocombinewithprima}, we get
  \begin{equation*}
   \begin{split}
   (1-\sigma)R\,\|Du\|_{L^p(B_{\sigma R})}
    & \leq \varrho_0\,\delta \,(1-\sigma)R\,\|D^2 u\|_{L^p(B_{\sigma'R})}
   +4\varrho_1\,\delta \,\|D u\|_{L^p(B_{\sigma'R})} \\
   & \quad + m\,(\c_p+4\varrho_2)\,
   \bigg\{\frac{\delta}{(1-\sigma)R}+\frac{(1-\sigma)R}{\delta}\bigg\}\,\|u\|_{L^p(B_{\sigma'R})}.
   \end{split}
  \end{equation*}
   Setting $\theta_p:=\varrho_0+4\,\varrho_1+m\,(\c_p+4\varrho_2)$, this gives
 \begin{equation} \label{eq.dovesceglieredelta}
   \begin{split}
   (1-\sigma)R\,\|Du\|_{L^p(B_{\sigma R})}
    & \leq \theta_p\,\delta \,(1-\sigma)R\,\|D^2 u\|_{L^p(B_{\sigma'R})}
   +\theta_p\,\delta \,\|D u\|_{L^p(B_{\sigma'R})} \\
   & \quad + \theta_p\,
   \bigg\{\frac{\delta}{(1-\sigma)R}+\frac{(1-\sigma)R}{\delta}\bigg\}\,\|u\|_{L^p(B_{\sigma'R})},
   \end{split}
  \end{equation}
  Now, if $\varepsilon\in(0,1]$ is arbitrarily fixed,
  since \eqref{eq.dovesceglieredelta} holds for every
  $\delta > 0$, we can choose in particular
  $$\delta = \delta_\varepsilon := \frac{(1-\sigma)R\,\varepsilon}{8\,\theta_p} > 0.$$
  Thanks to this choice of $\delta$, \eqref{eq.dovesceglieredelta} becomes
  \begin{equation} \label{eq.estimtotakesup}
   \begin{split}
    & (1-\sigma)R\,\|Du\|_{L^p(B_{\sigma R})}
     \leq \\
     & \frac{\varepsilon}{8}\,\big((1-\sigma)R\big)^2\,
    \|D^2 u\|_{L^p(B_{\sigma'R})}
    + \frac{\varepsilon}{8}\,(1-\sigma)R\,\|D u\|_{L^p(B_{\sigma'R})}
    + \bigg(\frac{\varepsilon}{8}+\frac{8\,\theta_p^2}{\varepsilon}\bigg)\,\|u\|_{L^p(B_{\sigma'R})}.
   \end{split}
  \end{equation}
  Bearing in mind that
  \begin{equation} \label{eq.sigmaprimedausare}
   \sigma'=\frac{1+\sigma}{2}\in(0,1),\qquad\text{so that}\qquad
  (1-\sigma)R=2(1-\sigma')R ,
  \end{equation}
   the above \eqref{eq.estimtotakesup} can be rewritten as
  \begin{equation*}
   \begin{split}
    & (1-\sigma)R\,\|Du\|_{L^p(B_{\sigma R})}
     \leq \\
     & \frac{\varepsilon}{2}\,\big((1-\sigma')R\big)^2\,
    \|D^2 u\|_{L^p(B_{\sigma'R})}
    + \frac{\varepsilon}{4}\,(1-\sigma')R\,\|D u\|_{L^p(B_{\sigma'R})}
    + \bigg(\frac{\varepsilon}{8}+\frac{8\,\theta_p^2}{\varepsilon}\bigg)\,\|u\|_{L^p(B_{\sigma'R})}.
   \end{split}
  \end{equation*}
  Taking the supremum over $\sigma\in(0,1)$ on both sides of the latter inequality, one gets
 \begin{align*}
  \Phi_1(u) \leq \frac{\varepsilon}{2}\,\Phi_2(u)+ \frac{\varepsilon}{4}\Phi_1(u)
  + \bigg(\frac{\varepsilon}{8}+\frac{8\,\theta_p^2}{\varepsilon}\bigg)\Phi_0(u).
 \end{align*}
 As a consequence, since $\varepsilon\in(0,1]$, we obtain
 \begin{align*}
  \frac{3}{4}\,\Phi_1(u) & \leq \bigg(1-\frac{\varepsilon}{4}\bigg)\Phi_1(u)
  \leq \frac{\varepsilon}{2}\,\Phi_2(u)+\bigg(\frac{\varepsilon}{8}+\frac{8\,\theta_p^2}{\varepsilon}\bigg)\Phi_0(u),
 \end{align*}
 from which we derive that
 \begin{align*}
  \Phi_1(u) & \leq \frac{2\,\varepsilon}{3}\,\Phi_2(u) + \frac{3}{4}\bigg(\frac{\varepsilon}{8}+
  \frac{8\,\theta_p^2}
  {\varepsilon}\bigg)\Phi_0(u)
  \leq \varepsilon\,\Phi_2(u)+\frac{\alpha_p}{\varepsilon}\,\Phi_0(u),
 \end{align*}
 where $\alpha_p := \dfrac{3(64\,\theta_p^2+1)}{32}$, which is the desired \eqref{eq.localinterpolationPhi}.
 \end{proof}
\section{Global estimates and regularity results}\label{sec:mainproof}
 In this last section we provide the proof of our main result,
 Theorem \ref{Ch8-Thm Sobolev Caso A}.
 To begin with, we establish the following lemma, of independent interest.
 \begin{lemma} \label{lem.Dksobolev}
  Let $p\in(1,\infty)$ and let $k$ be a nonnegative integer.
  There exists a positive constant $\Theta_{k,p} > 0$,
  only depending on $k$ and $p$, such that
  \begin{equation} \label{eq.DkSobolevLemma}
   \|D^{i+2}u\|_{L^p(\R^n)} \leq \Theta_{k,p}\,\|D^i(\LL u)\|_{L^p(\R^n)}\qquad \text{for $i\in\{0,\ldots,k\}$,}
  \end{equation}
  for every function $u\in W^{k+2,p}_X(\R^n)$.
  As usual,  $\LL u = \sum_{j = 1}^mX_j^2u$.
 \end{lemma}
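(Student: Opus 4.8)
The plan is to follow the pattern of the proof of Proposition~\ref{Prop global interpolation}: establish a \emph{scale‑invariant, single‑term} a priori estimate on the lifted Carnot group $\G$, transplant it to balls of $\R^n$ by means of Lemma~\ref{Prop projected norms}, and conclude with a dilation argument letting the ball exhaust $\R^n$. The reason for lifting (rather than using Rothschild--Stein's Theorem~A directly on $\R^n$) is that Theorem~A only supplies the full norm $\|\LL u\|_{W^{k,p}_X}$ on its right‑hand side, whose top‑order pieces survive the dilation; on the Carnot group one has a cleaner estimate because $\Delta_\G$ has a homogeneous fundamental solution.

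\textbf{Step 1: the estimate on $\G$.} Writing $\|\widetilde D^{j}v\|_{L^p(\G)}:=\sum_{|I|=j}\|\widetilde X_I v\|_{L^p(\G)}$, the first step is to prove
\[
 \|\widetilde D^{i+2}v\|_{L^p(\G)}\ \le\ C_{i,p}\,\|\widetilde D^{i}(\Delta_\G v)\|_{L^p(\G)}\qquad\text{for all }v\in C_0^\infty(\G),\ i\ge 0 .
\]
For $i=0$ this is Folland's $L^p$ estimate for the sub‑Laplacian, $\|\widetilde X_a\widetilde X_b v\|_{L^p(\G)}\le C\|\Delta_\G v\|_{L^p(\G)}$, valid because $\widetilde X_a\widetilde X_b\,\Delta_\G^{-1}$ is a Calder\'on--Zygmund operator on the homogeneous space $\G$ (see \cite[Thm.~6.1]{Fo2}). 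The general case follows by induction on $i$: given $|I|=i+2$, write $\widetilde X_I=\widetilde X_{I''}\widetilde X_j$ with $|I''|=i+1$ and put $h:=\widetilde X_j v\in C_0^\infty(\G)$; since $\widetilde X_j$ is left invariant, $\Delta_\G h=\widetilde X_j(\Delta_\G v)$, so the inductive hypothesis gives $\|\widetilde X_I v\|_{L^p(\G)}=\|\widetilde X_{I''}h\|_{L^p(\G)}\le C\|\widetilde D^{i-1}(\widetilde X_j\Delta_\G v)\|_{L^p(\G)}\le C\|\widetilde D^{i}(\Delta_\G v)\|_{L^p(\G)}$. A routine mollification (left group convolution with an approximate identity, which commutes with the left‑invariant $\widetilde X_j$) extends the estimate to every compactly supported $v\in W^{k+2,p}_{\widetilde X}(\G)$.

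\textbf{Step 2: transfer to $\R^n$.} Fix $i\in\{0,\dots,k\}$ and $u\in W^{k+2,p}_X(\R^n)$, set $\widetilde u(x,\xi):=u(x)$, and pick $\eta\in C_0^\infty(\G)$ with $\eta\equiv 1$ on $\widetilde B_{1/2}$ and supported in $\widetilde B_{1}$. By Lemma~\ref{Prop projected norms}, $v:=\eta\,\widetilde u$ is a compactly supported function in $W^{k+2,p}_{\widetilde X}(\G)$, so Step~1 applies. On the left, using $\widetilde X_I\widetilde u=\widetilde{X_I u}$, that $v\equiv\widetilde u$ on $\widetilde B_{1/2}$, and the lower bound in Lemma~\ref{Prop projected norms}, one gets $\|\widetilde D^{i+2}v\|_{L^p(\G)}\ge c\,\|D^{i+2}u\|_{L^p(B_{1/4})}$. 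On the right, from
\[
 \Delta_\G(\eta\,\widetilde u)=\eta\,\widetilde{\LL u}+2\sum_{j=1}^{m}(\widetilde X_j\eta)\,\widetilde{X_j u}+(\Delta_\G\eta)\,\widetilde u
\]
and the Leibniz rule, each term of $\widetilde D^{i}(\Delta_\G(\eta\,\widetilde u))$ is a fixed smooth compactly supported function times the lift of some $X_{J}(\LL u)$ with $|J|\le i$ or of some $X_{J}u$ with $|J|\le i+1$; bounding the corresponding $L^p(\widetilde B_1)$ norms by $L^p(B_1)$ norms via the upper bound in Lemma~\ref{Prop projected norms}, I obtain
\[
 \|D^{i+2}u\|_{L^p(B_{1/4})}\ \le\ C\Big(\|D^{i}\LL u\|_{L^p(B_{1})}+\sum_{j=0}^{i-1}\|D^{j}\LL u\|_{L^p(B_{1})}+\sum_{j=0}^{i+1}\|D^{j}u\|_{L^p(B_{1})}\Big).
\]

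\textbf{Step 3: dilation and conclusion.} Apply the last inequality with $u$ replaced by $u\circ\delta_R$. By \emph{(H.1)}, $X_I(u\circ\delta_R)=R^{|I|}(X_Iu)\circ\delta_R$ and $\LL(u\circ\delta_R)=R^{2}(\LL u)\circ\delta_R$, while $\|f\circ\delta_R\|_{L^p(B_1)}=R^{-q/p}\|f\|_{L^p(B_R)}$; dividing through by $R^{\,i+2-q/p}$ turns the inequality into
\[
 \|D^{i+2}u\|_{L^p(B_{R/4})}\ \le\ C\Big(\|D^{i}\LL u\|_{L^p(B_{R})}+\sum_{j=0}^{i-1}R^{\,j-i}\|D^{j}\LL u\|_{L^p(B_{R})}+\sum_{j=0}^{i+1}R^{\,j-i-2}\|D^{j}u\|_{L^p(B_{R})}\Big).
\]
All the norms on the right are dominated by $\|u\|_{W^{k+2,p}_X(\R^n)}$, every exponent $j-i$ (for $j<i$) and $j-i-2$ (for $j\le i+1$) is negative, and $\|D^{i+2}u\|_{L^p(B_{R/4})}\uparrow\|D^{i+2}u\|_{L^p(\R^n)}$; hence letting $R\to+\infty$ yields $\|D^{i+2}u\|_{L^p(\R^n)}\le C\,\|D^{i}\LL u\|_{L^p(\R^n)}$, and one takes $\Theta_{k,p}$ to be the largest constant obtained over $i=0,\dots,k$. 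The point requiring care is the bookkeeping in Step~2: one must check that every term created by the cut‑off and the lifting involves at most $i+1$ derivatives of $u$, or at most $i$ derivatives of $\LL u$, so that — since applying $\LL$ counts as two extra derivatives under the dilation and we divide by $R^{i+2}$ — each of them is annihilated in the limit.
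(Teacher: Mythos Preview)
Your proof is correct but takes a longer route than the paper's. The paper applies Rothschild--Stein's Theorem~A \emph{directly on $\R^n$}, at level $i$ (not $k$), to the rescaled function $v_R=u\circ\delta_R$ on the fixed pair $B_1\Subset B_2$:
\[
 \|D^{i+2}v_R\|_{L^p(B_1)}\le\|v_R\|_{W^{i+2,p}_X(B_1)}\le c_{i,p}\Big(\|\LL v_R\|_{W^{i,p}_X(B_2)}+\|v_R\|_{L^p(B_2)}\Big);
\]
after unscaling and dividing by $R^{\,i+2-q/p}$ the right-hand side becomes $\sum_{j\le i}R^{\,j-i}\|D^j(\LL u)\|_{L^p(B_{2R})}+R^{-i-2}\|u\|_{L^p(B_{2R})}$, and letting $R\to\infty$ isolates exactly $\|D^i(\LL u)\|_{L^p(\R^n)}$. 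Your stated reason for avoiding Theorem~A --- that the full norm $\|\LL u\|_{W^{k,p}_X}$ would leave a surviving top-order piece --- is a misreading: one is free to invoke Theorem~A with parameter $i$, and then the surviving top-order term is precisely the desired $\|D^i(\LL u)\|$. Your detour (lift to $\G$, use Folland's global bound for $\Delta_\G$, cut off, transfer back via Lemma~\ref{Prop projected norms}, then dilate) works and has the minor virtue of not treating Theorem~A as a black box, but it costs the Leibniz bookkeeping of Step~2 and the mollification of Step~1; the paper's argument bypasses all of this in a few lines.
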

 \begin{proof}
  Let $i\in\{0,\ldots,k\}$ be fixed. For every $R > 0$, we consider
  the function $v_R := u\circ\delta_R$. Since, by assumption,
  $u$ belongs to $W^{k+2,p}_X(\R^n)$ (and $\delta_R$ is linear),
  it is easy to see that
  $$v_R\in W^{k+2,p}_X(\R^n)\subseteq W^{i+2,p}_X(\R^n).$$
  Thus, since $\LL = \sum_{j = 1}^mX_j^2$ is a H\"ormander
  sum of squares in $\R^n$, we are entitled to apply Theorem A
  for $v_R\in W^{i+2,p}_X(\R^n)$, with  $\Omega' := B_1(0)$ and $\Omega'' := B_2(0)$, obtaining
  (for some $c_{i,p} > 0$)
  \begin{equation} \label{eq.estimThmAvR}
   \|D^{i+2} v_R\|_{L^p(B_1)}
  \leq \|v_R\|_{W^{i+2,p}_X(B_1)}
  \leq c_{i,p}\,\Big\{\|\LL v_R\|_{W_{X}^{i,p}(B_2)}+
  \|v_R\|_{L^{p}(B_2)}\Big\}.
  \end{equation}
  We now observe that, since $X_1,\ldots,X_m$ are $\dela$-homogeneous
  of degree $1$, one has
  \begin{equation} \label{eq.siccomeXihomog}
  \begin{split}
    & \|\LL v_R\|_{W_{X}^{i,p}(B_2)}
    = \sum_{|I|\leq i} R^{2+|I|}\,\Big\| \big(X_I(\LL u)\big)\circ\delta_R\Big\|_{L^p(B_2)},\\
   &
   \|D^{i+2} v_R\|_{L^p(B_1)}
  = R^{i+2}\,\sum_{|I|=i+2} \Big\| \big(X_I u\big)\circ\delta_R\Big\|_{L^p(B_1)};
  \end{split}
  \end{equation}
  thus, by inserting \eqref{eq.siccomeXihomog} in \eqref{eq.estimThmAvR}, we obtain
  \begin{align*}
   & R^{i+2-q/p}\,\|D^{i+2} u\|_{L^p(B_R)}
   \leq c_{i,p}\,\bigg\{
   \sum_{j = 0}^iR^{2+j-q/p}\,\|D^j(\LL u)\|_{L^p(B_{2R})} +
    R^{-q/p}\,\|u\|_{L^p(B_{2R})}\bigg\}.
  \end{align*}
  Finally, since this last inequality clearly implies that
  $$\|D^{i+2} u\|_{L^p(B_R)}
  \leq c_{i,p}\,\bigg\{\sum_{j = 0}^iR^{j-i}\,\|D^j(\LL u)\|_{L^p(B_{2R})} +
    R^{-i-2}\,\|u\|_{L^p(B_{2R})}\bigg\},$$
  upon letting $R\to\infty$, we derive
  (remind that $u\in W^{k+2,p}_X(\R^n)$ and that $c_{i,p}$ is independent of $R$)
  $$\|D^{i+2} u\|_{L^p(\R^n)}\leq c_{i,p}\,\|D^i(\LL u)\|_{L^p(\R^n)}.$$
  This readily gives the desired
  \eqref{eq.DkSobolevLemma} with $\Theta_{k,p} := \max_{i = 0,\ldots,k}c_{i,p}$.
\end{proof}
 With Lemma \ref{lem.Dksobolev} at hand, we can prove the following
 global estimates for $\LL$.
 \begin{theorem}[Global $W^{k+2,p}_X$-estimates for $\LL$]\label{Thm global estimates}
 Let $p\in(1,\infty)$ and let $k$ be a nonnegative integer.
 There exists a constant $ \Lambda_{k,p} > 0$ such that, if
  $u\in W^{k+2,p}_X(\R^n)$, then
 \begin{equation} \label{global estimate}
 \|u\|_{W_{X}^{k+2,p}(\mathbb{R}^{n})} \leq
 \Lambda_{k,p}\,\Big\{\|\LL u\|_{W_{X}^{k,p}(\mathbb{R}^{n})}
  +\|u\|_{L^{p}(\mathbb{R}^{n})}\Big\}.
\end{equation}
\end{theorem}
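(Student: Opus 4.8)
The plan is to decompose $\|u\|_{W^{k+2,p}_X(\R^n)}=\sum_{i=0}^{k+2}\|D^i u\|_{L^p(\R^n)}$ and estimate each summand separately, feeding everything into the two results already established. The term $i=0$ is $\|u\|_{L^p(\R^n)}$, which is already present on the right-hand side of \eqref{global estimate}, so nothing is needed there. For $i\in\{2,3,\ldots,k+2\}$ write $i=j+2$ with $j\in\{0,\ldots,k\}$; then Lemma \ref{lem.Dksobolev} gives
\[
\|D^{i}u\|_{L^p(\R^n)}\le \Theta_{k,p}\,\|D^{j}(\LL u)\|_{L^p(\R^n)}\le \Theta_{k,p}\,\|\LL u\|_{W^{k,p}_X(\R^n)},
\]
where the last inequality just uses $j\le k$, so that $\|D^j(\LL u)\|_{L^p(\R^n)}\le \|\LL u\|_{W^{k,p}_X(\R^n)}$. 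This controls all $X$-derivatives of $u$ of order between $2$ and $k+2$.

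The only remaining summand is the first-order one, $\|Du\|_{L^p(\R^n)}$, and this is precisely where the global interpolation inequality enters — note it is legitimate to invoke it because $u\in W^{k+2,p}_X(\R^n)\subseteq W^{2,p}_X(\R^n)$. Applying Proposition \ref{Prop global interpolation} with, say, $\varepsilon=1$ and summing over $i=1,\ldots,m$ yields
\[
\|Du\|_{L^p(\R^n)}=\sum_{i=1}^m\|X_i u\|_{L^p(\R^n)}\le \sum_{i=1}^m\|X_i^2 u\|_{L^p(\R^n)}+m\,\c_p\,\|u\|_{L^p(\R^n)}.
\]
Since each $X_i^2=X_I$ for the length-$2$ multi-index $I=(i,i)$, one has $\sum_{i=1}^m\|X_i^2 u\|_{L^p(\R^n)}\le \|D^2 u\|_{L^p(\R^n)}$, and by Lemma \ref{lem.Dksobolev} (case $i=0$) this is $\le \Theta_{k,p}\,\|\LL u\|_{L^p(\R^n)}\le \Theta_{k,p}\,\|\LL u\|_{W^{k,p}_X(\R^n)}$. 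Hence
\[
\|Du\|_{L^p(\R^n)}\le \Theta_{k,p}\,\|\LL u\|_{W^{k,p}_X(\R^n)}+m\,\c_p\,\|u\|_{L^p(\R^n)}.
\]

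Summing all the contributions — the $i=0$ term, the single bound for $\|Du\|_{L^p(\R^n)}$, and the $k+1$ bounds for $\|D^i u\|_{L^p(\R^n)}$ with $2\le i\le k+2$ — produces
\[
\|u\|_{W^{k+2,p}_X(\R^n)}\le (k+2)\,\Theta_{k,p}\,\|\LL u\|_{W^{k,p}_X(\R^n)}+\big(1+m\,\c_p\big)\,\|u\|_{L^p(\R^n)},
\]
so the theorem follows with $\Lambda_{k,p}:=\max\{(k+2)\,\Theta_{k,p},\,1+m\,\c_p\}$. There is no genuine obstacle at this stage: all the analytic substance — the dilation rescaling that upgrades Rothschild--Stein's local estimate to a global one, and the lifting-based global interpolation inequality — has been isolated in Lemma \ref{lem.Dksobolev} and Proposition \ref{Prop global interpolation}, and what remains is bookkeeping. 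The one point worth emphasizing is that the first-order term $\|Du\|_{L^p(\R^n)}$ is \emph{not} covered by Lemma \ref{lem.Dksobolev} (whose output starts at order $2$), and it is exactly to fill this gap that the global interpolation inequality is indispensable.
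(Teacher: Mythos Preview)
Your proof is correct and follows essentially the same approach as the paper's own proof: decompose $\|u\|_{W^{k+2,p}_X}$ into the $L^p$-norms of $D^i u$, apply Lemma \ref{lem.Dksobolev} for the orders $i\ge 2$, and handle the first-order term via Proposition \ref{Prop global interpolation} (with $\varepsilon=1$) followed by the $i=0$ case of Lemma \ref{lem.Dksobolev}. The only immaterial difference is in the bookkeeping of constants: the paper sums $\sum_{i=0}^k\|D^i(\LL u)\|_{L^p}=\|\LL u\|_{W^{k,p}_X}$ directly and thus arrives at $\Lambda_{k,p}=\max\{2\Theta_{k,p},\,m\c_p+1\}$, whereas you bound each $\|D^{j}(\LL u)\|_{L^p}$ by $\|\LL u\|_{W^{k,p}_X}$ separately and end up with the slightly larger $(k+2)\Theta_{k,p}$.
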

 \begin{proof}
  By crucially exploiting Lemma \ref{lem.Dksobolev}, we have the estimate
  \begin{equation} \label{eq.globalestimateStepI}
  \begin{split}
   \|u\|_{W^{k+2,p}(\R^n)}
   & \,\,\,=\,\,\|u\|_{L^p(\R^n)}+\|Du\|_{L^p(\R^n)}
   + \sum_{i = 0}^k\|D^{i+2}u\|_{L^p(\R^n)} \\
   & \stackrel{\eqref{eq.DkSobolevLemma}}{\leq}
   \|u\|_{L^p(\R^n)}+ \|Du\|_{L^p(\R^n)}+\Theta_{k,p}\,\sum_{i = 0}^k\|D^i(\LL u)\|_{L^p(\R^n)}\\
   & \,\,\,=\,\,
   \|u\|_{L^p(\R^n)}+ \|Du\|_{L^p(\R^n)}+\Theta_{k,p}\,\|\LL u\|_{W^{k,p}_X(\R^n)}.
  \end{split}
  \end{equation}
  On the other hand, by using the global
  interpolation inequality \eqref{Prop global interpolationequata}
  (with $\varepsilon = 1$), we have
  \begin{equation} \label{eq.globalestimateStepII}
  \begin{split}
   \|Du\|_{L^p(\R^n)}
   & = \sum_{j = 1}^m\|X_ju\|_{L^p(\R^n)}
   \leq \sum_{j = 1}^m\|X_j^2u\|_{L^p(\R^n)}+
   m\,\c_p\,\|u\|_{L^p(\R^n)} \\
   & \leq \|D^2u\|_{L^p(\R^n)}
   + m\,\c_p\,\|u\|_{L^p(\R^n)}\leq \qquad \big(\text{by Lemma \ref{lem.Dksobolev} with $i = 0$}\big) \\
   & \leq \Theta_{k,p}\,\|\LL u\|_{L^p(\R^n)}+  m\,\c_p\,\|u\|_{L^p(\R^n)}.
   \end{split}
  \end{equation}
  Gathering together \eqref{eq.globalestimateStepI} and
  \eqref{eq.globalestimateStepII}, we obtain
  \eqref{global estimate} (with $\Lambda_{k,p} = \max\{2\,\Theta_{k,p},m\,\c_p+1\}$).
 \end{proof}
 We now turn to demonstrate the last ingredient for the proof
 of Theorem \ref{Ch8-Thm Sobolev Caso A}:
 \begin{theorem}[Global Sobolev regularity theorem for $\LL$] \label{Thm regularity}
  Let $p\in(1,\infty)$ and let
  $k$ be a nonnegative integer.
  Suppose that $u\in L^{p}(\mathbb{R}^{n})$ is such that $\LL u\in W_{X}^{k,p}(\mathbb{R}^{n})$
  \emph{(}meaning that the distribution $\LL u$
   can be identified with a function belonging to $W_{X}^{k,p}(\mathbb{R}^{n})$\emph{)}.

   Then $u\in W_{X}^{k+2,p}(\mathbb{R}^{n})$.
\end{theorem}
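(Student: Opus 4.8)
The plan is to bootstrap from the \emph{local} regularity of Rothschild--Stein (Theorem~A) to the desired \emph{global} membership $u\in W^{k+2,p}_X(\R^n)$, using the dilations $\dela$ to make the local estimates scale invariant and the local interpolation inequality of Proposition~\ref{Prop local interpolation} to recover the first-order term. \textbf{Step 1 (local regularity).} By Remark~\ref{hormandereverywhere}, $X_1,\dots,X_m$ satisfy H\"ormander's condition at every point of $\R^n$, so Theorem~A applies with $\Omega=\R^n$. Since $u\in L^p(\R^n)$ (hence a distribution on $\R^n$) and $\LL u\in W^{k,p}_X(\R^n)$, part (i) of Theorem~A yields $u\in W^{k+2,p}_{X,\loc}(\R^n)$, i.e.\ $u\in W^{k+2,p}_X(B_R(0))$ for every $R>0$. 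Thus all the derivatives $X_Iu$ with $|I|\le k+2$ exist as $L^p_{\loc}$ functions, and it only remains to show that they lie in $L^p(\R^n)$.

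\textbf{Step 2 (derivatives of order $\ge 2$, by scaling).} Fix $i\in\{0,\dots,k\}$ and $R\ge 1$, and set $v_R:=u\circ\delta_R$. Then $v_R\in W^{k+2,p}_{X,\loc}(\R^n)\subseteq W^{i+2,p}_X(B_2(0))$ by Step~1, while $\LL v_R=R^2\,(\LL u)\circ\delta_R\in W^{i,p}_X(\R^n)$ since $\LL$ is $\dela$-homogeneous of degree $2$; hence estimate (ii) of Theorem~A applies to $v_R$ on $B_1(0)\Subset B_2(0)$. Arguing exactly as in the proof of Lemma~\ref{lem.Dksobolev} — a computation that uses only the $\dela$-homogeneity of the $X_j$'s and the scaling of $L^p$-norms under $\delta_R$, and never the global summability of $u$ — one obtains, after dividing by the appropriate power of $R$ and using $R^{j-i}\le1$, $R^{-i-2}\le1$,
\[
 \|D^{i+2}u\|_{L^p(B_R(0))}\le c_{i,p}\Big\{\|\LL u\|_{W^{i,p}_X(B_{2R}(0))}+\|u\|_{L^p(B_{2R}(0))}\Big\}\qquad(R\ge1).
\]
Since $R\mapsto\|D^{i+2}u\|_{L^p(B_R(0))}$ is nondecreasing and the right-hand side stays bounded, as $R\to\infty$, by $c_{i,p}\big(\|\LL u\|_{W^{k,p}_X(\R^n)}+\|u\|_{L^p(\R^n)}\big)<\infty$, monotone convergence gives $D^{i+2}u\in L^p(\R^n)$. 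Letting $i$ run over $\{0,\dots,k\}$, we conclude $X_Iu\in L^p(\R^n)$ for all $I$ with $2\le|I|\le k+2$.

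\textbf{Step 3 (first-order term, by local interpolation).} By Step~2 with $i=0$ we have $D^2u\in L^p(\R^n)$; also $u\in L^p(\R^n)$ by hypothesis and $u\in W^{2,p}_X(B_R(0))$ for every $R$ by Step~1. Fix $R\ge1$ and apply Proposition~\ref{Prop local interpolation} with $\e:=1/R\in(0,1]$. Bounding the three quantities in \eqref{eq.localinterpolationPhi} by $\Phi_1(u)\ge\tfrac{R}{2}\|Du\|_{L^p(B_{R/2}(0))}$ (take $\sigma=\tfrac12$ in \eqref{eq.defiPhik}), $\Phi_2(u)\le R^2\|D^2u\|_{L^p(\R^n)}$ and $\Phi_0(u)\le\|u\|_{L^p(\R^n)}$, we obtain
\[
 \tfrac{R}{2}\,\|Du\|_{L^p(B_{R/2}(0))}\le R\,\|D^2u\|_{L^p(\R^n)}+\alpha_p\,R\,\|u\|_{L^p(\R^n)},
\]
whence $\|Du\|_{L^p(B_{R/2}(0))}\le 2\big(\|D^2u\|_{L^p(\R^n)}+\alpha_p\|u\|_{L^p(\R^n)}\big)$ uniformly in $R$; letting $R\to\infty$ yields $Du\in L^p(\R^n)$. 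Together with Step~2 and the hypothesis $u\in L^p(\R^n)$, this gives $X_Iu\in L^p(\R^n)$ for every $|I|\le k+2$, i.e.\ $u\in W^{k+2,p}_X(\R^n)$.

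The crux — and the reason one cannot simply invoke the cleaner global interpolation of Proposition~\ref{Prop global interpolation} — is that $Du\in L^p(\R^n)$ is \emph{a priori} unknown at the moment it is needed, so the recovery of the first-order term must be carried out through the scale-sensitive, localized interpolation of Proposition~\ref{Prop local interpolation}, combined with the specific choice $\e=1/R$ that balances the exploding factor $R^2$ against the fixed data on the right-hand side.
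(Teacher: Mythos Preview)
Your proof is correct, and it takes a somewhat different---and in places cleaner---route than the paper's.

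The paper's argument proceeds by cut-offs: in its Step~I it multiplies $u$ by a cut-off $\phi$ adapted to $B_{\sigma R}\subset B_{\sigma'R}$, applies the global estimate of Theorem~\ref{Thm global estimates} to $u\phi$, and then uses the local interpolation of Proposition~\ref{Prop local interpolation} to absorb the $\Phi_1$ term arising from the commutator $[\LL,\phi]$; in its Step~III it again cuts off, applies Lemma~\ref{lem.Dksobolev}, and runs a finite induction on $i$ to control the lower-order derivatives that the cut-off generates. Your Step~2 bypasses all of this by observing that the scaling computation in the proof of Lemma~\ref{lem.Dksobolev} only needs $u\in W^{k+2,p}_{X,\loc}(\R^n)$ together with $u\in L^p(\R^n)$ and $\LL u\in W^{k,p}_X(\R^n)$: keeping the lower-order terms (rather than killing them in the limit, which would require the a~priori unknown global bound) and simply estimating $R^{j-i}\le1$, $R^{-i-2}\le1$ for $R\ge1$ gives a uniform bound on $\|D^{i+2}u\|_{L^p(B_R)}$ directly. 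This handles all orders $2\le i+2\le k+2$ at once, with no cut-offs and no induction. For the first-order term, the paper quotes the intermediate inequality \eqref{interp furba} from inside the proof of Proposition~\ref{Prop global interpolation}, whereas you invoke Proposition~\ref{Prop local interpolation} with the scale-adapted choice $\varepsilon=1/R$; both achieve the same goal of balancing the $R^2$ in front of $\|D^2u\|$ against the known finiteness of $\|D^2u\|_{L^p(\R^n)}$ from Step~2. What your approach buys is brevity and a clearer separation of ideas; what the paper's cut-off approach buys is a self-contained use of Lemma~\ref{lem.Dksobolev} as stated (with its global hypothesis), at the cost of the extra commutator bookkeeping.
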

 By combining Theorems \ref{Thm global estimates} and
 \ref{Thm regularity}, we can readily provide the
 \begin{proof}[Proof of Theorem \ref{Ch8-Thm Sobolev Caso A}]
 Let $u\in L^p(\R^n)$ be such that $\LL u\in W^{k,p}_X(\R^n)$ (for some
 $p\in (1,\infty)$ and some integer $k\geq 0$). On account of Theorem \ref{Thm regularity},
 we have that
 $$u\in W^{k+2,p}_X(\R^n);$$
 as a consequence, by Theorem \ref{Thm global estimates} we have
 $$\|u\|_{W_{X}^{k+2,p}(\mathbb{R}^{n})} \leq
 \Lambda_{k,p}\,\Big\{\|\LL u\|_{W_{X}^{k,p}(\mathbb{R}^{n})}
  +\|u\|_{L^{p}(\mathbb{R}^{n})}\Big\},$$
  for a suitable constant $\Lambda_{k,p} > 0$
  independent on $u$. This ends the proof.
 \end{proof}
 We are left with the
 \begin{proof}[Proof of Theorem \ref{Thm regularity}]
  Let $u$ be as in the assertion of Theorem \ref{Thm regularity}.
  By Theorem A, $u\in W^{k+2,p}_{X,\loc}(\R^n)$; thus,
  to prove the
  theorem it suffices to show that
  \begin{equation} \label{eq.Claimtoprove}
   \| D^{i}u\|_{L^p(\R^n)}<\infty\qquad\text{for every $i=1,\ldots,k+2$}.
  \end{equation}
  To prove \eqref{eq.Claimtoprove}, we proceed by steps. \medskip

  \textsc{Step I:} We begin by proving that \eqref{eq.Claimtoprove}
  holds for $i = 2$. \medskip

  To this end, let $R > 0$ be arbitrarily fixed,
  let $\sigma\in (0,1)$ and let $\phi\in C_0^\infty(\R^n)$ be a cut-off
  function as in Lemma \ref{Lemma cutoff}, with
  $r_1 := \sigma R$ and $r_2 := \sigma'R$ (where $\sigma' := (1+\sigma)/2 < 1$).
  Since $v:=u\,\phi$ belongs to $W^{k+2,p}_X(\R^n)\subseteq W^{2,p}_X(\R^n)$,
  we can apply Theorem \ref{Thm global estimates}
  (with $k = 0$) to $v$, obtaining
  $$\|D^2(u\,\phi)\|_{L^p(\R^n)}\leq \|u\,\phi\|_{W^{2,p}_X(\R^n)}\leq \Lambda_{0,p}\,\Big\{
   \|\LL (u\,\phi)\|_{L^p(\R^n)}+\|u\,\phi\|_{L^p(\R^n)}
  \Big\}.$$
  From this, by taking into account properties (i)-to-(iii) of
  $\phi$ in Lemma \ref{Lemma cutoff}, we get
  \begin{equation*}
   \begin{split}
   & \|D^2 u\|_{L^p(B_{\sigma R})}= \|D^2 (u\,\phi)\|_{L^p(B_{\sigma R})} \leq
   \|D^2(u\,\phi)\|_{L^p(\R^n)}\leq \Lambda_{0,p}\,\Big\{
   \|\LL (u\,\phi)\|_{L^p(\R^n)}+\|u\,\phi\|_{L^p(\R^n)}
  \Big\} \\
  & \quad \leq
  \Lambda_{0,p}\,\bigg\{\bigg\|\phi\,\LL u
  + 2\sum_{j=1}^m\,X_ju\, X_j\phi
  + u\,\LL\phi\bigg\|_{L^p(B_{\sigma'R})} +
  \|u\phi\|_{L^p(B_{\sigma'R})}	\Big\} \\
  & \quad \leq\gamma_p\,\bigg\{
  \|\LL u\|_{L^p(B_{\sigma'R})}+
  \frac{1}{(1-\sigma)R}\|Du\|_{L^p(B_{\sigma'R})}
  + \bigg(\frac{1}{(1-\sigma)^2R^2}+1\bigg)\|u\|_{L^p(B_{\sigma'R})}\bigg\},
   \end{split}
  \end{equation*}
  where $\gamma_p > 0$ is a constant only depending on $p$ and on
  $\varrho_0,\varrho_1,\varrho_2$ in \eqref{eq.DkSobolevLemma}
  (hence, $\gamma_p$ is
  independent of $R$ and $\sigma$).
  We multiply both far sides of the above inequality
  by $(1-\sigma)^2R^2 > 0$,
  \begin{align*}
   &\big((1-\sigma)R\big)^2\,\|D^2u\|_{L^p(B_{\sigma R})}
   \leq\\
   &\gamma_p\,\bigg\{
  R^2\,\|\LL u\|_{L^p(B_{\sigma'R})}+
  (1-\sigma)\,R\,\|Du\|_{L^p(B_{\sigma'R})} + (1+R^2)\,\|u\|_{L^p(B_{\sigma'R})}\bigg\}.
  \end{align*}
  Due to the arbitrariness of $\sigma$,
  remembering
  the definition of $\Phi_i(u)$ (with $i = 0,1,2$) in
  \eqref{eq.defiPhik} and
  using the local interpolation
  inequality in Proposition \ref{Prop local interpolation}, we
  get
  (see also \eqref{eq.sigmaprimedausare})
  \begin{equation*}
   \begin{split}
    \Phi_2(u) & \leq \gamma_p\,\bigg\{R^2\,\|\LL u\|_{L^p(B_{R})}
    + 2\,\Phi_1(u)
    +(1+R^2)\,\|u\|_{L^p(B_{R})}\bigg\} \\
    & \big(\text{by \eqref{Prop global interpolationequata} with
    $0 < \varepsilon < \min\big\{1,(2\gamma_p)^{-1}\big\}$}\big) \\
    & \leq
    \gamma_p\,\bigg\{R^2\,\|\LL u\|_{L^p(B_{R})}
    + 2\,\varepsilon\,\Phi_2(u)
    +\Big(1+R^2+\frac{2\,\alpha_p}{\varepsilon}\Big)\|u\|_{L^p(B_{R})}\bigg\}.
   \end{split}
  \end{equation*}
  As a consequence (isolating $\sigma=1/2$ in the definition
  of $\Phi_2(u)$), we obtain
  \begin{align*}
   \|D^2 u\|_{L^p(B_{R/2})} & = \frac{4}{R^2}\bigg(
   \frac{R^2}{4}\,\|D^2 u\|_{L^p(B_{R/2})}\bigg) \leq \frac{4}{R^2}\,\Phi_2(u) \\
    & \leq \frac{4\,\gamma_p}{1-2\,\varepsilon\,\gamma_p}\,\bigg\{
   \|\LL u\|_{L^p(B_{R})}
    +\Big(\frac{1}{R^2}+1+\frac{2\,\alpha_p}{\varepsilon\,R^2}\Big)\|u\|_{L^p(B_{R})}
    \bigg\}.
  \end{align*}
  Finally, letting $R\to\infty$ (and remembering that
  $\gamma_p$ does not depend on $R$), one has
  $$\|D^2 u\|_{L^p(\R^n)}\leq
  \frac{4\,\gamma_p}{1-2\,\varepsilon\,\gamma_p}\,\bigg\{
  \|\LL u\|_{L^p(\R^n)}+\|u\|_{L^p(\R^n)}\bigg\},$$
  and this proves that
  $\| D^{2}u\|_{L^p(\R^n)}<\infty$ (since, by assumption, both $u$ and $\LL u$
  belong to $L^p(\R^n)$). \medskip

  \textsc{Step II:} We now prove that \eqref{eq.Claimtoprove}
  holds for $i = 1$.  To this end, let $R > 0$ be arbitrarily fixed.
  Since $u\in W^{k+2,p}_{X,\loc}(\R^n)$, we know that $u\in W^{2,p}_{X,\loc}(B_R)$.
  In due course
  of the proof of Proposition \ref{Prop global interpolation},
  we have proved that, if $R$ is sufficiently large,
  it holds that
  (see \eqref{interp furba} with $\varepsilon = 1$)
 $$\| X_{i}u\|_{L^{p}( B_{R/4}) }\leq  \|X_{i}^{2}u\|_{L^{p}( B_{R}) }
 +\c_p\,\|u\|_{L^{p}( B_{R}) };$$
 as a consequence, we infer that
 \begin{align*}
 \|D u\|_{L^{p}( B_{R/4}) } & =
  \sum_{j = 1}^m\| X_{i}u\|_{L^{p}( B_{R/4}) }\leq
 \sum_{j = 1}^m\|X_{i}^{2}u\|_{L^{p}( B_{R}) }
 +m\,\c_p\,\|u\|_{L^{p}( B_{R}) } \\
 & \leq \|D^2 u\|_{L^{p}( B_{R}) } + m\,\c_p\,\|u\|_{L^{p}( B_{R}) }.
 \end{align*}
  By letting $R\to\infty$
  (and remembering that $\c_p$ does not depend on $R$), we get
  $$\|D u\|_{L^{p}(\R^n)}
  \leq \|D^2 u\|_{L^{p}(\R^n)} + m\c_p\,\|u\|_{L^{p}(\R^n)},$$
  and this proves that $\|D u\|_{L^{p}(\R^n)}<\infty$,
  as $u\in L^p(\R^n)$ and, by Step I, $\|D^2 u\|_{L^{p}(\R^n)}<\infty$. \medskip

  \textsc{Step III:} In this last step we show that \eqref{eq.Claimtoprove}
  holds for every $i=3,\ldots,k+2$. \medskip

  To this end, we first perform a (finite) induction
  argument on $i\in\{0,\ldots,k\}$ to prove the existence of a constant $\kappa_i > 0$,
 only  depending on $i$ (and on $k$ and $p$), such that
  \begin{equation} \label{eq.toproveinduction}
   \|D^{i+2}u\|_{L^p(B_h)}\leq
   \kappa_i\,\Big\{\|\LL u\|_{W^{i,p}_X(B_{h+1+i})}
   + \|Du\|_{L^p(B_{h+1+i})}+\|u\|_{L^p(B_{h+1+i})}\Big\},\quad \forall\,\,h\in \mathbb{N}.
     \end{equation}
     Let us start with the case $i = 0$. For any fixed $h\in\N$, we choose
     a cut-off function $\phi_h\in C_0^\infty(\R^n)$ as in Lemma \ref{Lemma cutoff},
     with $r_1 := h$ and $r_2 := h+1$, and we define
     $v_h := u\phi_h$.
     Since we already know that $u\in W^{k+2,p}_{X,\loc}(\R^n)$, we have
  $v_h \in W^{k+2,p}_X(\R^n)$;
  as a consequence, by Lemma \ref{lem.Dksobolev} (with $i = 0$),
  $$\|D^{2}v_h\|_{L^p(\R^n)} \leq \Theta_{k,p}\,\|\LL v_h\|_{L^p(\R^n)}.$$
  From this, taking into account the properties of $\phi_h$ in Lemma \ref{Lemma cutoff}, we have
  (notice that $r_2-r_1 = 1$)
  \begin{align*}
    \|D^{2} u\|_{L^p(B_h)}
    &=\|D^{2} v_h\|_{L^p(B_h)} \leq \|D^{2}v_h\|_{L^p(\R^n)} \leq \Theta_{k,p}\,\|\LL v_h\|_{L^p(\R^n)} \\
   & \leq \Theta_{k,p}\,\Big\|u\,\LL\phi_h + 2\,\sum_{j = 1}^mX_ju\,
   X_j\phi_h + \phi_h\,\LL u\Big\|_{L^p(B_{h+1})} \\
   &  \leq \kappa_1\,\bigg\{\|\LL u\|_{L^p(B_{h+1})}
   + \|u\|_{L^p(B_{h+1})}+\|Du\|_{L^p(B_{h+1})}\bigg\},
  \end{align*}
  where $\kappa_1 > 0$ is a constant only depending
  on the bounds
  $\varrho_0,\varrho_1,\varrho_{2}$ in \eqref{eq.estimDkphi}
  (hence, $\kappa_1$ does not depend on $h$). This is precisely
  the desired \eqref{eq.toproveinduction} with $i = 0$.

  Let us now take $j\in\{0,\ldots,k-1\}$ and, assuming
  that \eqref{eq.toproveinduction} holds
  for $i=0,\ldots,j$, let us prove that \eqref{eq.toproveinduction} is fulfilled
  for $i$ replaced by $j+1$. Arguing as above, with the very same $\phi_h$, by
  applying Lemma \ref{lem.Dksobolev} to the function $v_h = u\phi_h$
  (and with $i = j+1\leq k$), we obtain
  \begin{align*}
  & \|D^{j+3}u\|_{L^p(B_h)} \leq \|D^{j+3}v_h\|_{L^p(\R^n)}
  \stackrel{\eqref{lem.Dksobolev}}{\leq}
  \Theta_{k,p}\,\|D^{j+1}(\LL v_h)\|_{L^p(\R^n)}\\
   & \quad \leq \Theta_{k,p}\,\bigg\|D^{j+1}\Big(u\,\LL\phi_h + 2\,\sum_{l = 1}^mX_lu\,
   X_l\phi_h + \phi_h\,\LL u\Big)\bigg\|_{L^p(B_{h+1})} \\
   & \quad \leq \Theta'_{k,p}\,\bigg\{\|D^{j+1}(\LL u)\|_{L^p(B_{h+1})}
   + \sum_{l = 0}^{j+2}\|D^l u\|_{L^p(B_{h+1})}\bigg\} \\
   & \quad = \Theta'_{k,p}\,\bigg\{\|D^{j+1}(\LL u)\|_{L^p(B_{h+1})}
   + \|u\|_{L^p(B_{h+1})}+\|Du\|_{L^p(B_{h+1})}
   + \sum_{i = 0}^{j}\|D^{i+2} u\|_{L^p(B_{h+1})}\bigg\} = (\star),
  \end{align*}
  where $\Theta'_{k,p} > 0$ is a suitable constant independent of $h$.
  On the other hand, since we are assuming that
  \eqref{eq.toproveinduction} holds for any
   $0\leq i\leq j $ (and for every $h\geq 1$), we have
  \begin{align*}
   \|D^{i+2}u\|_{L^p(B_{h+1})}& \leq
   \kappa_i\,\Big\{\|\LL u\|_{W^{i,p}_X(B_{h+2+i})}
   + \|Du\|_{L^p(B_{h+2+i})}+\|u\|_{L^p(B_{h+2+i})}\Big\} \\
   & \leq
   \kappa_i\,\Big\{\|\LL u\|_{W^{j,p}_X(B_{h+2+j})}
   + \|Du\|_{L^p(B_{h+2+j})}+\|u\|_{L^p(B_{h+2+j})}\Big\}.
   \end{align*}
   By using this last estimate, we obtain
   \begin{align*}
    (\star) & \leq \Theta'_{k,p}\,\bigg(1+\sum_{i = 0}^{j}\kappa_i\bigg)\cdot
    \bigg\{\|D^{j+1}(\LL u)\|_{L^p(B_{h+2+j})}
   + \|u\|_{L^p(B_{h+2+j})}+\|Du\|_{L^p(B_{h+2+j})} \\
   & \qquad \quad \qquad\qquad \qquad\quad +\|\LL u\|_{W^{j,p}_X(B_{h+2+j})}
   + \|Du\|_{L^p(B_{h+2+j})}+\|u\|_{L^p(B_{h+2+j})} \bigg\} \\
   & \leq \kappa_{j+1}\,\Big\{\|\LL u\|_{W^{j+1,p}_X(B_{h+2+j})}
   + \|Du\|_{L^p(B_{h+2+j})}+\|u\|_{L^p(B_{h+2+j})}\Big\},
   \end{align*}
   where we have introduced the constant (independent of $h$)
   $\kappa_{j+1} := 2\,\Theta'_{k,p}\,\big(1+\sum_{i = 0}^{j}\kappa_i\big)$.
   This is precisely the desired \eqref{eq.toproveinduction} with $i$ replaced by $j+1$ and we are done.

 Letting $h\to \infty$ in \eqref{eq.toproveinduction}, one gets
\begin{equation*}
   \|D^{i+2}u\|_{L^p(\R^n)}\leq
   \kappa_i\,\Big\{\|\LL u\|_{W^{i,p}_X(\R^n)}
   + \|Du\|_{L^p(\R^n)}+\|u\|_{L^p(\R^n)}\Big\},\quad \text{for $i=0,\ldots,k$}.
\end{equation*}
 Since the right-hand side is finite due to Step II (and the assumption), we infer
  $\|D^{i+2}u\|_{L^p(\R^n)}<\infty$ for $i=0,\ldots,k$, and the proof is complete.
\end{proof}


\begin{thebibliography}{99}

\bibitem{BBCCM}
 S. Biagi, A. Bonfiglioli:
 \emph{A completeness result for time-dependent vector fields and applications.}
 Commun. Contemp. Math. \textbf{17} (2015), 1--26.

\bibitem{BiBo}
 S. Biagi, A. Bonfiglioli:
 \emph{The existence of a global fundamental
 solution for homogeneous H\"{o}rmander operators via a global lifting method.}
 Proc. Lond. Math. Soc. \textbf{114} (2017), 855--889.

\bibitem{BiagiBonfBook}
 S. Biagi, A. Bonfiglioli:
 ``An Introduction to the Geometrical Analysis of Vector Fields -
 with Applications to Maximum Principles and Lie Groups'',
World Scientific Publishing, Singapore (2019).

\bibitem{BLUlibro}
 A. Bonfiglioli, E. Lanconelli, F. Uguzzoni: \newblock
 ``Stratified Lie Groups and Potential
 Theory for their sub-La\-pla\-cians'', \newblock
 Springer Monographs in Mathematics \textbf{26},
 Springer, New York, N.Y., 2007.

\bibitem {BrBrbook}M. Bramanti, L. Brandolini: H\"{o}rmander operators.
Monograph, to appear.

\bibitem {BBto}M. Bramanti, L. Brandolini: $L^{p}$ estimates for uniformly
hypoelliptic operators with discontinuous coefficients on homogeneous groups.
Rend. Sem. Mat. Univ. Politec. Torino 58 (2000), no. 4, 389--433 (2003).

\bibitem {BCLP}M. Bramanti, G. Cupini, E. Lanconelli, E. Priola: Global
$L^{p}$ estimates for degenerate Ornstein-Uhlenbeck operators.
Mathematische Zeitschrift, 266, n. 4 (2010), pp. 789-816.

\bibitem {BCLP2}M. Bramanti, G. Cupini, E. Lanconelli, E. Priola: Global
$L^{ p}$ estimates for degenerate Ornstein-Uhlenbeck operators
with variable coefficients. Mathematische Nachrichten, Vol. 286, Issue 11-12
(2013), 1087--1101.

\bibitem {Fo2}G. B. Folland: Subelliptic estimates and function spaces on
nilpotent Lie groups. Ark. Mat. 13 (1975), no. 2, 161--207.

\bibitem {Fo3}G. B. Folland: On the Rothschild-Stein lifting theorem. Comm.
Partial Differential Equations 2 (1977), no. 2, 165--191.

\bibitem {Hor2}L. H\"{o}rmander: Hypoelliptic second order differential
equations. Acta Math. 119 (1967), 147-171.

\bibitem {NSW}A. Nagel, E. M. Stein, S. Wainger: Balls and metrics defined by
vector fields. I. Basic properties. Acta Math. 155 (1985), no. 1-2, 103--147.

\bibitem {RS}L. P. Rothschild, E. M. Stein: Hypoelliptic differential
operators and nilpotent groups. Acta Math. 137 (1976), no. 3-4, 247--320.
\end{thebibliography}
\end{document}